\newcommand{\R}{\textnormal{I\kern-0.21emR}}
\newcommand{\N}{\textnormal{I\kern-0.21emN}}
\renewcommand{\geq}{\geqslant}
\renewcommand{\leq}{\leqslant}
\def\e{{\varepsilon}}
\def\tmm{{\theta_{m,\mu}}}
\def\YYint#1#2#3{{\setbox0=\hbox{$#1{#2#3}{\iint}$}
    \vcenter{\hbox{$#2#3$}}\kern-.51\wd0}}
\newtheorem*{theorem*}{Theorem}
\newtheorem{theorem}{Theorem}
\newtheorem{lemma}{Lemma}
\theoremstyle{definition}\newtheorem{remark}{Remark}
\def\O{{\Omega}}
\def\n{{\nabla}}
\def\p{{\varphi}}
 \newcommandx{\unsure}[2][1=]{\todo[linecolor=red,backgroundcolor=red!25,bordercolor=red,#1]{#2}}
 \newcommandx{\change}[2][1=]{\todo[linecolor=blue,backgroundcolor=blue!25,bordercolor=blue,#1]{#2}}
 \newcommandx{\info}[2][1=]{\todo[linecolor=green,backgroundcolor=green!25,bordercolor=green,#1]{#2}}
 \newcommandx{\improvement}[2][1=]{\todo[linecolor=yellow,backgroundcolor=yellow!25,bordercolor=yellow,#1]{#2}}
  \newcommandx{\biblio}[2][1=]{\todo[linecolor=blue,backgroundcolor=magenta!25,bordercolor=blue,#1]{#2}}
\begin{document}
\nocite{*}
\title{A fragmentation phenomenon for a non-energetic optimal control problem: \\optimisation of the total population size in logistic diffusive models}


\author{Idriss Mazari\footnote{Institute of Analysis and Scientific Computing, TU Wien, Wiedner Hauptstrasse 8-10, 1040 Vienna, Austria. (\texttt{idriss.mazari@sorbonne-universite.fr})},  \quad Dom\`enec Ruiz-Balet \footnote{Chair of Computational Mathematics, Fundaci\'on Deusto, Av. de las Universidades, 24,  48007 Bilbao, Basque Country, Spain} \footnote{ Departamento de Matem\'eticas, Universidad Aut\'onoma de Madrid, 28049 Madrid, Spain}}
\date{\today}

\maketitle

\begin{abstract}
Following the recent works \cite{DeAngelis2020,InoueKuto,MNP,NagaharaYanagida,Zhang2017}, we investigate the problem of optimising the total population size for logistic diffusive models with respect to resources distributions. Using the spatially heterogeneous Fisher-KPP equation, we obtain a surprising fragmentation phenomenon: depending on the scale of diffusivity (i.e the dispersal rate), it is better to either concentrate or fragment resources. Our main result is that, the smaller the dispersal rate of the species in the domain, the more optimal resources distributions tend to oscillate. This is in sharp contrast with other criteria in population dynamics, such as the classical problem of optimising the survival ability of a species, where concentrating resources is always favourable, regardless of the diffusivity. Our study is completed by numerous numerical simulations that confirm our results. 

\end{abstract}

\noindent\textbf{Keywords:} diffusive logistic equation, optimal control, shape optimization.

\medskip

\noindent\textbf{AMS classification:} 35Q92,49J99,34B15.


\section{Introduction}
\subsection{Scope of this article: fragmentation and concentration for spatial ecology}
In this article, we study a problem of great relevance in the field of spatial ecology. Namely, considering a species dispersing in a domain where some resources are available:
\begin{center}
\emph{How should we spread resources so as to maximise the total population size at equilibrium?}\end{center}
Here, we focus on a fine qualitative analysis of this question and emphasise the crucial role of the characteristic diffusion rate of the population (or, equivalently, of the size of the domain).

Regarding mathematical biology, spatially heterogeneous models are of paramount importance, as acknowledged, for instance, in \cite{GC03}. Natural questions arise when considering such models: one may for instance think of spatially heterogeneous systems of reaction-diffusion equations, in which case a relevant question is that of existence and stability of (non-trivial) equilibria (see for instance \cite{HN,HENIII,HeNi,Mazari}).

 Here, we focus on single-species models, in which case two problems have drawn a lot of attention from both the mathematical and the mathematical biology communities: the problem of optimal survival ability, and the problem of optimising the total population size. We expand on  bibliographical references in Subsection \ref{Se:Bibl} of this Introduction, but let us stress the following fact: while the optimisation of the survival ability with respect to resources distribution is fairly well-understood (in terms of qualitative analysis, see for instance \cite{BHR,KaoLouYanagida,LamboleyLaurainNadinPrivat}), the problem of the total population size, which has been the subjects of several recent articles (we refer for instance to \cite{BaiHeLi,DeAngelis2020,InoueKuto,LouNagaharaYanagida,MNP,NagaharaYanagida,Zhang2017}) is still elusive when considered from a qualitative point of view. For instance, for the optimal survival ability, the following paradigm has been established:
\begin{center}\begin{emph}{
Concentration of resources favors survival ability.}
\end{emph}\end{center}
This was first observed  in \cite{ShigesadaKawaski}, and given a proper mathematical analysis in \cite{BHR}, in terms of rearrangements.
One of the other conclusions of \cite{BHR} is that heterogeneity is favorable to survival ability: under natural assumptions (made precise in Section \ref{Se:Bibl} through the definition of the admissible class, Equation \eqref{Eq:Ad}), in order to maximise the survival ability of a population, one should work with patch-models. Here, this means the following: provided the population evolves in $\O$ and the resources distributions $m:\O\to \R$ satisfy pointwise  ($0\leq m\leq 1$) and integral ($\int_\O m\leq C$) bounds, the optimal resources distribution for survival ability $m^*$ satisfies $\O=\{m^*=0\}\sqcup\{m^*=1\}$. Such results generally do not depend on the dispersal rate: regardless of this characteristic speed, resources distributions should be concentrated if we want to optimise the survival ability.

The problem of optimising the total population size, on the other hand, is much more complicated to tackle at the mathematical level. One of the main questions that have been investigated is the influence of diffusion on the population size criterion (which in some models favours the total population size, see \cite{LouInfluence}), and we refer to \cite{Zhang2017}, as well as the recent survey \cite{DeAngelis2020} for a biological perspective on this question. In these two last references, the following question is also presented: can the total population size exceed the total amount of resources? This question, for the model we are going to consider, has been solved in dimension 1 in \cite{BaiHeLi} and, in dimension $n\geq 2$, in the recent preprint \cite{InoueKuto}. In all of these papers, the dispersal rate plays a crucial role in the analysis.

Regarding qualitative properties, as will be explained further in Section \ref{Se:Bibl}, very few things are known. The relevance of patch-models for this optimisation problem has been investigated in \cite{MNP} and \cite{NagaharaYanagida}, but, so far, the only qualitative results can be found in \cite{MNP}: for large dispersal rates, concentrating resources favours the total population size while, for small diffusivities, fragmentation (i.e. scattering resources across the domain) may be better.

The goal of this article is to give a complete treatment of the case of small dispersal rates for the logistic-diffusive Fisher-KPP equation, and our main result, Theorem \ref{Th:Frag}, may be interpreted as follows:
\begin{center}
\begin{emph}{
To maximise the total population size, the smaller the diffusivity, the more one should fragment resources.}
\end{emph}
\end{center}

From a calculus of variations (or optimal control) perspective, our article's main innovation is that it gives a qualitative analysis of a non-energetic optimisation problems. Such problems are notoriously hard to analyse, given that their structure prohibits using classical tools (e.g. rearrangements, symmetrisation) and that the analysis of optimality conditions is very tricky. Here, we propose an approach relying on strong non-monotonicity properties of the functional that is to be optimised.

Finally, we provide several numerical experiments that validate our results. 

This article is organised as follows: In Section \ref{Se:Bibl}, we present the model and the variational problem under consideration,  and recall the several qualitative properties available in the literature. In Section \ref{TH}, we state our main result.  Its proof is given in Section \ref{Proof}. In Section \ref{Num}, we give several numerical simulations to illustrate Theorem \ref{Th:Frag}. Finally, we present concluding comments and open questions in the Conclusion.

\subsection{Setting and bibliographical references}\label{Se:Bibl}

We are working here with the spatially heterogeneous Fisher-KPP equation (which originated in the seminal \cite{Fisher,KPP}). Let us  consider, in dimension $n$, the box 
$$\O=(0;1)^n,$$
which will serve as our domain. We could consider more general boxes $\O=\prod_{i=1}^n(0;a_i)$, but the results and proofs would be the same. We consider a positive parameter $\mu>0$, which will be referred to as dispersal rate, or diffusivity. To model the spatial heterogeneity, we use resources distributions, i.e. functions $m:\O\to \R$. Finally, we take into account an intra-specific, non-linear reaction term from the classical logistic equation. This gives the following equation: assuming the population density $\tmm$ has reached an equilibrium, it solves

\begin{equation}\label{LDE}
\begin{cases}
\mu \Delta \tmm+\tmm\left(m-\tmm\right)=0\text{ in }\O,
\\
\\\frac{\partial \tmm}{\partial \nu}=0\text{ on }\partial \O,
\\
\\\tmm>0\text{ in }\O.
\end{cases}
\end{equation}
For Equation \eqref{LDE} to have a solution, one must restrict the class of resources distributions. If we assume 
$$m\in L^\infty(\O)\,, \int_\O m>0,$$ then \cite{BHR,CantrellCosner1,MR1105497} guarantee the existence, uniqueness and stability of a  solution to \eqref{LDE}. 

We introduce the functional 
$$F:(m,\mu)\mapsto \fint_\O \tmm,$$ where, for any function $\p\in L^1(\O)$, the notation $\fint_\O \p$ stands for 
$$\fint_\O \p:=\frac1{|\O|}\int_\O \p,$$ and we consider the optimisation problem
$$\sup_{m\,, \fint_\O m>0} F(m,\mu)=\fint_\O \tmm.$$
This problem is ill-posed without further constraints on $m$. Two natural constraints can be set, a pointwise ($L^\infty$) constraint, and a $L^1$ constraint, which leads to introducing the admissible class:

\begin{equation}\label{Eq:Ad}\mathcal M(\O):=\left\{m\in L^\infty(\O)\,, 0\leq m\leq \kappa\,, \fint m=m_0\right\}\end{equation} where $\kappa\,, m_0>0$ are two positive constants (we require $m_0<\kappa$ to ensure that $\mathcal M(\O)\neq \emptyset$).
This admissible class was proposed in \cite{Lou2008} and used, for instance, in \cite{MNP,NagaharaYanagida}.

The optimisation problem under consideration is 
\begin{equation}\label{PV}\tag{$P_\mu$}
\fbox{$\displaystyle \max_{m\in \mathcal M(\O)}\fint_\O \tmm.$}\end{equation}
The direct method of the calculus of variations yields in a straightforward way the existence of a solution $m_\mu^*\in \mathcal M(\O)$.

\paragraph{A remark on the constraints}
We would like to stress the importance of the pointwise constraint $0\leq m\leq \kappa$.
As mentioned in the first part of this Introduction, a natural question was that of knowing whether the total population size could exceed the total amount of resources, see \cite{DeAngelis2020,Zhang2017}. In other words, what can be said about the ratio
$$E(m):=\sup_{\mu>0}\frac{\fint_\O \tmm}{\fint_\O m},$$ where $m$ satisfies $m\geq 0$, $m\neq 0$?
It follows from \cite{LouInfluence} that 
$$E(m)\geq 1.$$

 In the one-dimensional case, Bai, He and Li proved, in \cite{BaiHeLi} that 
$$E(m)\leq 3$$ and that this bound is not attained for any $m$. 

In the $n$-dimensional case, $n\geq 2$, Inoue and Kuto  proved \cite{InoueKuto}
$$\sup_{m\geq 0\,, m\neq 0}E(m)=+\infty.$$
\normalsize

\paragraph{Upper and lower bound on \eqref{PV}} In \cite{LouInfluence}, it is established that, for any $\mu>0$, $m\equiv m_0$ was a global strict minimizer of $F(\cdot,\mu)$ in $\mathcal M(\O)$. The maximum principle implies that, for any $\mu>0$ and any $m\in \mathcal M(\O)$, we have $\tmm\leq \kappa$, so that we get the following upper and lower bounds on the criterion:
$$\forall \mu>0\,, \forall m\in \mathcal M(\O)\,, 0<m_0\leq F(m,\mu)\leq \kappa.$$
This upper bound is very crude but, to the best of our knowledge, is the best one for all diffusivities. It is possible to refine it in several cases. In \cite{MazariNadinPrivat} it is for instance proved that the value of \eqref{PV} converges to $m_0$ as $\mu\to \infty$. In sharp contrast, when $\mu \to 0$,  Remark \ref{Rem} of the present article seems to indicate that the sharpest upper bound, i.e. the value of \eqref{PV}, will actually converge to $\sup_{\mu>0\,, m\in \mathcal M(\O)}F(m,\mu)$.  \color{black}

\paragraph{Qualitative properties for \eqref{PV}}
One of the main features of problems such as \eqref{PV} is the bang-bang property: denoting by $m_\mu^*$ a maximiser for \eqref{PV}, is it true that there exists a set $E_\mu^*$ such that $m_\mu^*=\kappa \mathds 1_{E_\mu^*}$? Such characteristic functions are called bang-bang functions. This property is of paramount importance in optimisation and, from a mathematical biology point of view, corroborates the relevance of patch-models, see \cite{CantrellCosner}.
 
Let us briefly sum up the main conclusions of \cite{MNP,NagaharaYanagida}, which contain the most up to date qualitative informations of that sort about \eqref{PV}:
\begin{enumerate}
\item A bang-bang property is proved in \cite{NagaharaYanagida}: if the set $\{0<m<\kappa\}$ contains an open ball, then $m$ is not a solution of \eqref{PV}. Here, a regularity assumption is thus needed.
\item In \cite{MNP}, it is proved that the bang-bang property holds for all large enough diffusivities.
\item It is furthermore proved, also in \cite{MNP} that:
\begin{enumerate}
\item In the one-dimensional case $\O=(0;1)$, there exists $\mu_1>0$ such that, for every $\mu\geq \mu_1$, the unique solutions of \eqref{PV} are $$m^*:=\kappa \mathds 1_{(0;\ell)}\text{ and }m_*:=\kappa \mathds 1_{(1-\ell;\ell)},$$ with $\kappa\ell=m_0$.
We note that these are also optimal configurations for survival ability (see \cite{BHR,LamboleyLaurainNadinPrivat}).
\item In the $2$-dimensional case $\O=(0;1)\times (0;1)$, concentration holds for large diffusivities in the following sense: any sequence $\{m_\mu^*\}_{\mu\to \infty}$ of solutions of \eqref{PV} converges, up to a subsequence, to a bang-bang function $m_\infty^*=\kappa \mathds 1_{E_\infty^*}$ which is non-increasing in every direction; in other words, $x\mapsto m(x,y)$ (resp. $y\mapsto m(x,y)$) is non-increasing for almost every $y$ (resp. non-increasing for almost every $x$)
\end{enumerate}
\item In \cite{MNP} it is proved that, for small enough diffusivities, fragmentation may be better in the following sense: two crenels are better than one crenel.\end{enumerate}

\paragraph{Qualitative properties for a discretized version of \eqref{PV}}   In the recent \cite{LouNagaharaYanagida}, a spatially discretized version of Equation \eqref{LDE} and of the optimization problem \eqref{PV} is considered in dimension 1. In this so-called \textquotedblleft patchy environment model \textquotedblright, they obtain a complete characterization of maximizers for certain classes of parameters and, most notably, establish the periodicity of optimal resources distributions for certain values of these parameters. 

\color{black}

\color{white}ajikbouob\color{black}

As already mentioned, our goal is to prove a strong fragmentation phenomenon for small diffusivities. A way to formalise this fragmentation would be to restrict ourselves to looking for bang-bang solutions of \eqref{PV}, i.e. solutions of the form $m_\mu^*=\kappa\mathds1_{E_\mu^*}$ and to prove that 
$$Per(E_\mu^*)\underset{\mu\to 0}\rightarrow +\infty,$$ where $Per(E_\mu^*)$ is the Cacciopoli perimeter of the set $E_\mu^*$:
$$
Per(E_\mu^*)=\sup\left\{\int_\Omega \mathds 1_E \mathrm{div}{\phi}\,,  {\phi}\in C_c^1(\Omega,\R^n),\ \|{\phi}\|_{L^\infty(\Omega)}\le 1\right\}.$$

However, the problem 
$$\sup_{m\in \mathcal M(\O)\,, \text{ $m$ bang-bang}}F(m,\mu)$$ does not necessarily have a solution, as remarked above.

\paragraph{Mathematical formulation of fragmentation}
To quantify the perimeter or the regularity of the optimal resources distribution, we  introduce, for a fixed $M>0$, the class
\begin{equation}\label{Eq:AdM}
\mathcal M_M(\O):=\left\{m\in \mathcal M(\O)\,, \Vert m\Vert_{BV(\O)}\leq M\right\}.\end{equation}
Here, the $BV(\O)$-norm refers to the bounded-variations norm. We note that, for instance, a set $E$ has a finite perimeter (in the sense of Caccioppoli) if and only if $\mathds 1_E$ is a function of bounded variations, so that it gives us a natural extension of the notion of perimeter to the set of admissible resources distributions.

For a general introduction to functions of bounded variations and their link with perimeter, we refer to \cite{AmbrosioFuscoPallara}.

\subsection{Main result}\label{TH}
The main result of this article is the following fragmentation property:

\begin{theorem}\label{Th:Frag}
Let, for any $\mu>0$, $m_\mu^*$ be a solution of \eqref{PV}. There holds
\begin{equation}\label{fragmentation}
\left\Vert m_\mu^*\right\Vert_{BV(\O)} \underset{\mu \to 0^+}\rightarrow +\infty.\end{equation} More precisely, we prove:
\begin{equation}\label{Eq:fragmentation2}
\forall M>0,\quad \exists \mu_M>0 \text{ s.t. }\forall \,0< \mu\leq \mu_M\, \sup_{m\in \mathcal M_M(\O)}\fint_\O \tmm<\sup_{m\in \mathcal M(\O)}\fint_\O \tmm.\end{equation}
\end{theorem}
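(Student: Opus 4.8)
The plan is to sandwich the two suprema appearing in \eqref{Eq:fragmentation2} between $m_0$ and a value strictly larger than $m_0$. I will show that the constrained supremum $\sup_{m\in\mathcal M_M(\O)}\fint_\O\tmm$ tends to $m_0$ as $\mu\to0^+$ for every fixed $M$, while the full supremum $\sup_{m\in\mathcal M(\O)}\fint_\O\tmm$ stays above $m_0+\delta_0$ for a fixed $\delta_0>0$ and all small $\mu$. Note that $m\equiv m_0$ lies in $\mathcal M_M(\O)$ as soon as $M\geq m_0$ (while $\mathcal M_M(\O)=\emptyset$ otherwise, making \eqref{Eq:fragmentation2} trivial), and that $m_0\leq F(m,\mu)$ for every admissible $m$ by the cited result of \cite{LouInfluence}; hence both suprema are always $\geq m_0$ and the whole content is in the two one-sided estimates. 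The elementary tool throughout is Lou's identity: dividing \eqref{LDE} by $\tmm$ and integrating by parts gives, on any box with Neumann conditions,
\[ \fint_\O \tmm - \fint_\O m = \mu\fint_\O\frac{|\n\tmm|^2}{\tmm^2}\geq 0, \]
with equality if and only if $m$ is constant.

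\emph{Lower bound on the full supremum (the construction).} Fix once and for all a non-constant profile $g\in\mathcal M((0,1)^n)$, for instance a single crenel valued in $\{0,\kappa\}$ with average $m_0$ (possible since $m_0<\kappa$). For $\lambda>0$ let $\Theta_\lambda$ solve \eqref{LDE} on the unit cell $(0,1)^n$ with data $m=g$, $\mu=\lambda$; Lou's identity applied on the cell gives $P(\lambda):=\fint_{(0,1)^n}\Theta_\lambda-m_0>0$ strictly, because $g$ is non-constant, and $\lambda\mapsto P(\lambda)$ is continuous. Fix $\lambda_0>0$ and set $\delta_0:=\tfrac12\inf_{\lambda\in[\lambda_0/2,2\lambda_0]}P(\lambda)>0$. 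For small $\mu$, choose an integer $N$ with $\lambda:=N^2\mu\in[\lambda_0/2,2\lambda_0]$ (possible since the admissible window for $N$ has length tending to $\infty$), and define $m_\mu\in\mathcal M(\O)$ by tiling $\O$ with $N^n$ copies of $g(N\cdot)$ obtained by even reflection across each cell interface. The same reflected tiling of $x\mapsto\Theta_\lambda(Nx)$ then solves \eqref{LDE} with data $(m_\mu,\mu)$ \emph{exactly} — the reflection is precisely what enforces the Neumann condition at every interface and on $\partial\O$ — so that $F(m_\mu,\mu)=\fint_{(0,1)^n}\Theta_\lambda=m_0+P(\lambda)\geq m_0+2\delta_0$. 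Hence $\liminf_{\mu\to0^+}\sup_{m\in\mathcal M(\O)}\fint_\O\tmm\geq m_0+2\delta_0$.

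\emph{Upper bound on the constrained supremum.} Here I claim $\sup_{m\in\mathcal M_M(\O)}\fint_\O\tmm\to m_0$ for each fixed $M$. Take near-maximisers $m_\mu\in\mathcal M_M(\O)$; as their $BV$-norms are bounded by $M$, the embedding $BV\hookrightarrow L^1$ yields (along a subsequence) $m_\mu\to\bar m$ in $L^1(\O)$ with $\bar m\in\mathcal M_M(\O)$, and it suffices to prove the singular limit $\theta_{m_\mu,\mu}\to\bar m$ in $L^1(\O)$, since then $F(m_\mu,\mu)\to\fint_\O\bar m=m_0$. I would obtain this from the variational characterisation of $\theta_{m_\mu,\mu}$ as the minimiser over $\{u\geq0\}$ of $E_\mu(u)=\frac\mu2\int_\O|\n u|^2-\frac12\int_\O m_\mu u^2+\frac13\int_\O u^3$, together with the $\Gamma$-convergence of $E_\mu$ to $E_0(u)=\int_\O(\frac13u^3-\frac12\bar m u^2)$, whose unique non-negative minimiser is $u=\bar m$ pointwise; the recovery sequence is a mollification of $\bar m$ at scale $\e\gg\mu$, for which $\mu\int_\O|\n\bar m_\e|^2\lesssim \mu M/\e\to0$. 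The $\Gamma$-liminf inequality is immediate because the gradient term is non-negative, but upgrading it to convergence of the minimisers requires equicoercivity, i.e. strong $L^2$-precompactness of $\{\theta_{m_\mu,\mu}\}$: this is delicate, since the only a priori bound $\mu\int_\O|\n\theta_{m_\mu,\mu}|^2\leq\kappa^3|\O|$ degenerates as $\mu\to0$ and the concave term $-\frac12\int m_\mu u^2$ is not weakly lower semicontinuous. Equivalently, via Lou's identity, $F(m_\mu,\mu)\to m_0$ means $\mu\fint_\O|\n\theta_{m_\mu,\mu}|^2/\theta_{m_\mu,\mu}^2\to0$, and here the weight $1/\theta_{m_\mu,\mu}^2$ blows up exactly on $\{\bar m=0\}$, where $\theta_{m_\mu,\mu}$ collapses. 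This is where the $BV$-bound must be exploited quantitatively — it limits the number and total surface of the transition layers, each of width $\sim\sqrt\mu$, so their cumulative contribution is $O(M\sqrt\mu)$ — and I expect that making this \emph{uniform} over $\mathcal M_M(\O)$, rather than for a single smooth $m$, is the main technical obstacle of the whole proof.

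\emph{Conclusion.} Combining the two estimates: for fixed $M$ pick $\mu_M$ so small that $\sup_{\mathcal M_M(\O)}\fint_\O\tmm<m_0+\delta_0$ for all $0<\mu\leq\mu_M$ and that the construction above applies; then
\[ \sup_{m\in\mathcal M_M(\O)}\fint_\O\tmm<m_0+\delta_0<m_0+2\delta_0\leq\sup_{m\in\mathcal M(\O)}\fint_\O\tmm, \]
which is exactly \eqref{Eq:fragmentation2}. Finally \eqref{fragmentation} follows: were $\Vert m_\mu^*\Vert_{BV(\O)}\leq M$ along some sequence $\mu_k\to0^+$, then $m_{\mu_k}^*\in\mathcal M_M(\O)$ would force $\fint_\O\theta_{m_{\mu_k}^*,\mu_k}\leq\sup_{\mathcal M_M(\O)}\fint_\O\tmm\to m_0$, contradicting $\fint_\O\theta_{m_{\mu_k}^*,\mu_k}=\sup_{\mathcal M(\O)}\fint_\O\tmm\geq m_0+2\delta_0$.
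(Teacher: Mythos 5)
Your first step is correct, and it is in substance the paper's own argument (Lemma \ref{Le:2}): on a box with Neumann conditions, even reflection lets a rescaled copy of a fixed non-constant profile solve \eqref{LDE} \emph{exactly} with the diffusivity rescaled by $N^2$, so a value $F(g,\lambda)>m_0$ attained on a fixed window of diffusivities propagates down to all small $\mu$. The paper implements this with dyadic scalings $2^k$ and overlapping intervals $I_k$ covering $(0;4\underline\mu)$, while you choose a general integer $N$ with $N^2\mu$ landing in a fixed window; both versions are valid and yield $\sup_{m\in\mathcal M(\O)}F(m,\mu)\geq m_0+\eta$ for all small $\mu$.

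The gap is in your second step, and you name it yourself: you never actually prove that $\sup_{m\in\mathcal M_M(\O)}\fint_\O \tmm\to m_0$ as $\mu\to 0^+$. Your $\Gamma$-convergence scheme reduces this to strong $L^2$-precompactness of the states $\theta_{m_\mu,\mu}$ (equicoercivity), which you leave open as ``the main technical obstacle'', supported only by a heuristic on transition layers contributing $O(M\sqrt\mu)$. As you correctly observe, the available bounds (the $L^\infty$ bound and the degenerate estimate $\mu\int_\O|\n \theta_{m_\mu,\mu}|^2\leq C$) give no strong compactness, and the term $-\frac12\int_\O m u^2$ is not weakly lower semicontinuous, so weak compactness is useless; the argument therefore genuinely stops there. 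The paper closes exactly this hole, without any $\Gamma$-convergence, in Lemma \ref{Le:Unif}: argue by contradiction, extract from the offending sequence $\{m_k\}\subset \mathcal M_M(\O)$ an $L^1$-convergent subsequence $m_k\to m_\infty$ (this is the only place the $BV$ bound is used), and then invoke Lou's uniform-in-$\mu$ continuity estimate \eqref{Eq:Lou}, namely $\Vert \theta_{m,\mu}-\theta_{m',\mu}\Vert_{L^1(\O)}\leq C\Vert m-m'\Vert_{L^1(\O)}^{1/3}$ with $C$ independent of $\mu$, to replace $\theta_{m_k,\mu_k}$ by $\theta_{m_\infty,\mu_k}$ up to $o(1)$; the fixed-coefficient singular limit \eqref{Eq:CVM}, applied to the single function $m_\infty$, then gives the contradiction by the triangle inequality. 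That uniform-in-$\mu$ $L^1$-continuity of $m\mapsto \tmm$ is precisely the quantitative ingredient your sketch is missing: with it, the passage from ``singular limit for each fixed $m$'' to ``singular limit uniformly on $\mathcal M_M(\O)$'' takes three lines, and your layer-counting analysis becomes unnecessary.
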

\begin{remark}
Since $||m_\mu^*||_{L^1(\O)}=m_0$, the above statement actually says that the $TV(\O)$-seminorm of $m_\mu^*$ blows up as $\mu\to 0$.
\end{remark}

\begin{remark}\label{ReNew}
Theorem \ref{Th:Frag}, which holds for a fixed domain with a small diffusivity, can be recast in the context of a fixed diffusivity in a large domain. Indeed, considering $\O=(0;1)^n$, we can use the change of variables $\tilde x:=\frac{x}{\sqrt{\mu}}$ to state our result in the following way: considering  the logistic equation (we use $\psi_{m,\frac{1}{\sqrt{\mu}}}$ to avoid confusion with $\theta_{m,\mu}$)
\begin{equation*}
\begin{cases} \Delta \psi_{m,\frac{1}{\sqrt{\mu}}}+\psi_{m,\frac{1}{\sqrt{\mu}}}(m-\psi_{m,\frac{1}{\sqrt{\mu}}})=0\text{ in }\frac1{\sqrt{\mu}}\O\,, 
\\\frac{\partial \psi_{m,\frac1{\sqrt\mu}}}{\partial \nu}=0\text{ on }\partial \frac1{\sqrt{\mu}}\O\,, 
\\ \psi_{m,\frac{1}{\sqrt{\mu}}}>0,\end{cases}
\end{equation*}
 the optimization problem
\begin{equation}\tag{$PV'$}\label{PV'}\max_{m\in \mathcal M(\frac1{\sqrt{\mu}}\O)}\fint_{\frac1{\sqrt{\mu}}\O} \psi_{m,\frac{1}{\sqrt{\mu}}}\,, 
\end{equation}
and defining $m_{\frac{1}{\sqrt{\mu}}}$ as a maximizer of \eqref{PV'} we have, for the $TV$-seminorm,
$$\mu^{\frac{n-1}2} \left\Vert m_{\frac{1}{\sqrt{\mu}}}\right\Vert_{TV(\frac1{\sqrt{\mu}}\O)}\underset{\mu \to 0}\rightarrow +\infty.$$
\end{remark}
\color{black}


\begin{remark} Two remarks are in order:
\begin{enumerate}
\item[$\bullet$] We could actually prove, using our method, that 
$$\overline{\underset{\mu \to 0^+}\lim}\left(\sup_{m\in \mathcal M(\O)}F(m,\mu)\right)=||F||_{L^\infty(\mathcal M(\O)\times \R_+)},$$
as will be explained later, see Remark \ref{Rem}. Proving this actually gives a (weaker) fragmentation result (i.e. one could find a subsequence of maximisers such that the corresponding sequences of $BV(\O)$-norms diverges to $+\infty$). This seems to indicate that finding the limit problem is very challenging. Finally, we were only able to prove that 
$${\underset{\mu \to 0^+}{\underline\lim}}\left(\sup_{m\in \mathcal M(\O)}F(m,\mu)\right)> m_0=\inf_{m\in \mathcal M(\O)\,, \mu\in \R_+} F(m,\mu).$$
\item[$\bullet$] Theorem \ref{Th:Frag} can be recast in terms of perimeters. In this case, one may consider the set of admissible subsets 
$$\mathcal O(\O):=\left\{E\subset \O\,, |E|=\frac{m_0}\kappa\right\}$$ and the auxiliary subsets
$$\mathcal O_M(E):=\left\{E\in \mathcal O(\O)\,, Per(E)\leq M\right\}.$$ Here, the perimeter is to be understood in the sense of Caccioppoli.

Note that, as already pointed out, the existence of a solution to 
$$\sup_{E\in \mathcal O(\O)}F(\mathds 1_E,\mu)$$ is not known for general $\mu$, see \cite{MNP,NagaharaYanagida}. However, we can prove, in the same fashion that, for every $M>0$, there exists $\mu_M>0$ such that, for any $0<\mu\leq \mu_M$, there holds
$$\sup_{E\in \mathcal O_M(\O)}F(\mathds 1_E,\mu)<\sup_{E\in \mathcal O(\O)}F(\mathds 1_E,\mu).$$

\end{enumerate}
\end{remark}

\begin{remark}\label{Re:NoPeriod}
Following \cite{LouNagaharaYanagida} in which, as mentioned, the periodic geometry of optimal resources distributions for a discretized version of \eqref{LDE} is established for certain classes of parameters, a natural question is to know whether, for the continuous problem considered here such geometric properties hold (in the one-dimensional case). Our numerical simulations in Section \ref{Num} seem to indicate that numerical maximizers are not necessarily periodic, which is in line with the simulations of \cite{MazariNadinPrivat}. This question is, to the best of our knowledge, completely open and seems highly challenging. We present a related conjecture in the conclusion. 

\end{remark}
\color{black}

\section{Proof of Theorem \ref{Th:Frag}}\label{Proof}
\subsection{The influence of periodisation}
The main idea is to exploit the non-monotonicity of the function 
$$\mu\mapsto F(m,\mu),$$ for a fixed $m$. 

We recall (see \cite{LouInfluence}) that
\begin{enumerate}
\item Setting $F(m,0)=F(m,+\infty)=m_0$ extends $F$ to a continuous function on $[0;+\infty]$.
\item $m_0$ is a strict, global minimiser of $F(m,\cdot)$ if and only if $m\not\equiv m_0.$ If $m\equiv m_0$, then $F(m,\cdot)\equiv m_0$. 
\item $F(m,\cdot)$ may have several local maxima (see \cite{LiangLou}, where a distribution $m$ such that $F(m,\cdot)$ has at least two local maxima is constructed).
\end{enumerate}
Our method of proof consists in exploiting this non-monotonicity, as well as Neumann boundary conditions and the fact that we are working in an orthotope.

Indeed, let $k\in \N$. We can extend $m$ and $\tmm$ to $[-1;1]^n$ by reflecting them across each of the axis segments $\{x_i=0, 0\leq x_i\leq 1\}\,, i=1,\dots,n$ and, then, we can extend them to $2$-periodic (in each direction) functions on $\R^n$. It then makes sense to define, for a given $m\in \mathcal M(\O)$, the functions 
$$m_k(x):=m\left(2^k x\right)\,, \theta_k(x):=\tmm(2^k x).$$ Straightforward computations show that $(m_k, \theta_k)$ solves
\begin{equation}
\begin{cases}
\displaystyle\frac{\mu}{2^{2k}}\Delta \theta_k+\theta_k(m_k-\theta_k)=0\text{ in }\O\,, 
\\
\\\displaystyle\frac{\partial \theta_k}{\partial \nu}=0\text{ on }\partial \O.
\end{cases}
\end{equation}
Furthermore, 
\begin{align*}
\fint_{[0;1]^n} \theta_k(x)dx&=\frac1{(2^k)^n}\int_{[0;2^k]^n} \tmm(y)dy
\\&=\fint_{[0;1]^n}\tmm.
\end{align*}
As a consequence of these identities, we have 
\begin{equation}\label{Eq:Period}F\left(m_k,\frac{\mu}{2^{2k}}\right)=F(m,\mu).\end{equation}
Visually, if we represent, for instance, $F(m,\cdot)$ as 
	\begin{center}
		\begin{tikzpicture}
		\draw [black] plot [smooth] coordinates {(0,1.14) (0.5,1.5) (1,1.3) (2,2.5) (3,1.5) (4,1.2) (5,1.16)};
		\draw[->](0,0.5)--(0,3);
				\draw[->](0,0.5)--(5,0.5);
				\draw[gray,dashed](0,1.14)--(5,1.14);
				 \node[gray] at (5,1) {$m_0$};
				  \node[black] at (5,0.3) {$\mu$};

		\end{tikzpicture}
		\end{center}

then $f_k:=F(m_k,\cdot)$ can be visualised as 
\begin{center}
		\begin{tikzpicture}
		\draw [black] plot [smooth] coordinates {(0,1.14) (0.25,1.5) (0.5,1.3) (1,2.5) (1.5,1.5) (2,1.2) (2.5,1.16) (5,1.155)};
		\draw[->](0,0.5)--(0,3);
				\draw[->](0,0.5)--(5,0.5);
				\draw[gray,dashed](0,1.14)--(5,1.14);
				 \node[gray] at (5,1) {$m_0$};
				  \node[black] at (5,0.3) {$\mu$};
		\end{tikzpicture}
		\end{center}
		
		
Using  \eqref{Eq:Period}, we are going to show that there exists $\eta>0$  and $\mu_\eta>0$ such that 
$$\inf_{0<\mu\leq \mu_\eta}\left(\sup_{ m\in \mathcal M(\O)}F(m,\mu)\right)\geq m_0+\eta.$$

Then, we will show that, for any $M>0$, for any $\e>0$, there exists $\mu_{M,\e}>0$ such that 
$$\inf_{0\leq \mu\leq \mu_{M,\e}}\sup_{ m\in \mathcal M_M(\O)}F(m,\mu)\leq m_0+\e.$$
The conclusion of Theorem 1 follows immediately from these two steps.

\subsection{Technical preliminaries}
\paragraph{Technical background}
We briefly recall some well-known facts about Equation \ref{LDE}. From the method of sub- and super-solutions ( we refer for instance to \cite{CantrellCosner}) we have 
$$\forall \mu\in (0;+\infty)\,, \forall m \in \mathcal M(\O)\,, 0\leq \tmm\leq \kappa.$$
Lou, in \cite{LouInfluence}, proves the following three results: first, 
\begin{equation}\forall \mu\in (0;+\infty)\,, \mu\fint_\O \frac{|\n \tmm|^2}{\tmm^2}=\fint_\O \tmm-m_0.\end{equation}
Then, 
\begin{equation}\label{Eq:CVM}
\forall m\in \mathcal M(\O)\,, \forall p\in [1;+\infty)\,, \Vert \tmm-m\Vert_{L^p(\O)}\underset{\mu \to 0}\rightarrow 0.
\end{equation}
Finally, he obtains the following estimate in \cite[Claim, Equation 2.4]{LouInfluence}: there exists a constant $C$  independent of $m$ and $\mu>0$ such that
\begin{equation}\label{Eq:Lou}\forall (m,m')\in \mathcal M(\O)^2\,, \forall \mu>0\,, \quad
\Vert \tmm-\theta_{m',\mu}\Vert_{L^1(\O)}\leq C \Vert m-m'||_{L^1(\O)}^{\frac13}.\end{equation}
Although Lou, in \cite{LouInfluence}, assumes that $m'$ is regular, it is readily checked that his proof does not depend on the smoothness of $m'$ and can be extended to all elements of $\mathcal M(\O)$ in a straightforward way.

\paragraph{Uniform convergence in $\mathcal M_M(\O)$ (as $\mu\to 0^+$)}
Our goal is to make the convergence result \eqref{Eq:CVM} uniform in $m\in \mathcal M_M(\O)$. This is the content of the following Lemma:

\begin{lemma}\label{Le:Unif}
For any $M>0$, the convergence result \eqref{Eq:CVM} is uniform in $\mathcal M_M(\O)$ in the following sense: let $M>0$ be fixed, then
\begin{equation}\label{Eq:Le1}
\forall \e_0>0\,, \exists \mu_{M,\e_0} \text{ s.t. } \forall m\in \mathcal M_M(\O),\quad   \forall\, 0\leq \mu\leq \mu_{M,\e_0} \quad \Vert \tmm-m\Vert_{L^1(\O)}\leq \e_0.\end{equation}
\end{lemma}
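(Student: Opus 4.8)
The plan is to upgrade the pointwise-in-$m$ convergence \eqref{Eq:CVM} to a convergence that is uniform over the $BV$-bounded class $\mathcal M_M(\O)$, and the key structural fact to exploit is that $\mathcal M_M(\O)$ is \emph{compact} in $L^1(\O)$. Indeed, $\mathcal M_M(\O)$ is a subset of $L^\infty$ with uniformly bounded $BV$-norm, so by the compactness theorem for $BV$ functions (Rellich-type embedding $BV(\O)\hookrightarrow\hookrightarrow L^1(\O)$, valid since $\O$ is a bounded Lipschitz domain), any sequence in $\mathcal M_M(\O)$ admits an $L^1$-convergent subsequence, and the pointwise constraints $0\leq m\leq\kappa$, $\fint m=m_0$ pass to the limit, so the limit lies in $\mathcal M(\O)$ (and in fact in $\mathcal M_M(\O)$, since the $BV$-norm is lower semicontinuous under $L^1$ convergence). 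This compactness is precisely the payoff of introducing the $BV$ constraint in \eqref{Eq:AdM}.

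I would argue by contradiction. Suppose \eqref{Eq:Le1} fails for some fixed $M>0$: then there is $\e_0>0$ and sequences $\mu_j\to 0^+$, $m_j\in\mathcal M_M(\O)$ with
\begin{equation*}
\Vert \theta_{m_j,\mu_j}-m_j\Vert_{L^1(\O)}>\e_0\quad\text{for all }j.
\end{equation*}
By the compactness just described, after extracting a subsequence (not relabelled) we may assume $m_j\to m_\infty$ in $L^1(\O)$ with $m_\infty\in\mathcal M(\O)$. The idea is now to split
\begin{equation*}
\Vert \theta_{m_j,\mu_j}-m_j\Vert_{L^1}
\leq \Vert \theta_{m_j,\mu_j}-\theta_{m_\infty,\mu_j}\Vert_{L^1}
+\Vert \theta_{m_\infty,\mu_j}-m_\infty\Vert_{L^1}
+\Vert m_\infty-m_j\Vert_{L^1}
\end{equation*}
and to show each term tends to $0$, contradicting the lower bound $\e_0$. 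The third term vanishes by construction. The middle term vanishes because $m_\infty$ is a \emph{fixed} element of $\mathcal M(\O)$ and $\mu_j\to 0$, so this is exactly the pointwise convergence \eqref{Eq:CVM} applied with $p=1$ to the single distribution $m_\infty$.

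The crucial point — and the reason Lou's estimate \eqref{Eq:Lou} is quoted in the technical preliminaries — is the first term. Here one applies \eqref{Eq:Lou} with the pair $(m_j,m_\infty)$, whose constant $C$ is \emph{independent of both $m$ and $\mu$}:
\begin{equation*}
\Vert \theta_{m_j,\mu_j}-\theta_{m_\infty,\mu_j}\Vert_{L^1(\O)}
\leq C\,\Vert m_j-m_\infty\Vert_{L^1(\O)}^{1/3}\longrightarrow 0,
\end{equation*}
since $m_j\to m_\infty$ in $L^1$. This uniform Hölder-type stability of the map $m\mapsto\theta_{m,\mu}$ is what lets us transfer convergence at the single limiting profile $m_\infty$ back to convergence along the whole sequence $m_j$, uniformly in the diffusivity. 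Combining the three estimates gives $\Vert\theta_{m_j,\mu_j}-m_j\Vert_{L^1}\to 0$, the desired contradiction, proving \eqref{Eq:Le1}. I expect the main (and only genuine) obstacle to be the contradiction/compactness bookkeeping: one must be careful that the diffusivity $\mu_j$ varies along the sequence while \eqref{Eq:CVM} is a statement for fixed $m$, so the decomposition must isolate the fixed profile $m_\infty$ in the term where \eqref{Eq:CVM} is invoked and rely on the $\mu$-uniform estimate \eqref{Eq:Lou} precisely in the term where the profile still varies; everything else is routine.
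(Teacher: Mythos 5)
Your proposal is correct and follows essentially the same route as the paper's own proof: a contradiction argument combining the compact embedding $BV(\O)\hookrightarrow L^1(\O)$ to extract a limit $m_\infty$, the pointwise convergence \eqref{Eq:CVM} applied to the fixed profile $m_\infty$, and Lou's $\mu$-uniform estimate \eqref{Eq:Lou} to control the term where the profile still varies. The only difference is cosmetic: the paper writes the triangle inequality as a lower bound on $\Vert \theta_{m_\infty,\mu_k}-m_\infty\Vert_{L^1(\O)}$, whereas you write it as an upper bound on $\Vert \theta_{m_j,\mu_j}-m_j\Vert_{L^1(\O)}$, which is the same estimate rearranged.
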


\begin{proof}[Proof of Lemma \ref{Le:Unif}]
We argue by contradiction. If we assume that \eqref{Eq:Le1} does not hold then there exists $\e_0>0$ and a sequence $\{m_k,\mu_k\}\in \left(\mathcal M_M(\O)\times (0;+\infty)\right)^\N$ such that:
\begin{enumerate}
\item $\{\mu_k\}_{k\in \N}$ is decreasing and converging to 0,
\item There holds:
\begin{equation}\label{Eq:Review}\forall k\in \N\,, \Vert\theta_{m_k,\mu_k}-m_k\Vert_{L^1(\O)}\geq \e_0>0.\end{equation}
\end{enumerate}
The embedding $BV(\O)\hookrightarrow L^1(\O)$ is compact. Hence, there exists $m_\infty\in \mathcal M_M(\O)$ such that 
\begin{equation}\label{Eq:Int}m_k\underset{k\to \infty}\rightarrow m_\infty\text{ strongly in } L^1(\O). \end{equation}
Thus, we can write, for any $k\in \N$,
\begin{align*}
\Vert \theta_{m_\infty,\mu_k}-m_\infty\Vert_{L^1(\O)}&\geq \Vert \theta_{m_k,\mu_k}-m_k\Vert_{L^1(\O)}-\Vert m_k-m_\infty\Vert_{L^1(\O)}-\Vert \theta_{m_k,\mu_k}-\theta_{m_\infty,\mu_k}\Vert_{L^1(\O)}
\\&\geq \e_0+\underset{k\to \infty}o(1)+\underset{k\to \infty}o(1)\text{ by, successively, \eqref{Eq:Review}, \eqref{Eq:Int} and \eqref{Eq:Lou}.}
\end{align*}

This is in  contradiction with  \eqref{Eq:CVM}.  Lemma \ref{Le:Unif} is proved.

\end{proof}

\paragraph{Estimating $\underset{\mu \to 0}{\underline \lim}\left(\sup_{\mathcal M(\O)} F(\cdot,\mu)\right)$}
The goal of this paragraph is the following Lemma:
\begin{lemma}\label{Le:2}
There exist $\eta>0$ and $\mu_\eta>0$ such that 
\begin{equation}\label{Eq:L2}
\inf_{0< \mu\leq \mu_\eta}\left(\sup_{ m\in \mathcal M(\O)}F(m,\mu)\right)\geq m_0+\eta.\end{equation}

\end{lemma}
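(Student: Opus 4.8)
The plan is to exploit the scaling identity \eqref{Eq:Period} together with the strict non-monotonicity of $\mu\mapsto F(m,\mu)$ recorded just above. First I would fix, once and for all, a single non-constant admissible distribution $\tilde m\in \mathcal M(\O)$ with $\tilde m\not\equiv m_0$; such a distribution exists precisely because $m_0<\kappa$. For this fixed $\tilde m$, the energy identity $\mu\fint_\O |\n \theta_{\tilde m,\mu}|^2/\theta_{\tilde m,\mu}^2=F(\tilde m,\mu)-m_0$ forces $F(\tilde m,\mu)>m_0$ for every $\mu\in(0;+\infty)$: indeed $\theta_{\tilde m,\mu}$ cannot be constant (a constant solution would force $\tilde m$ constant), so the left-hand side is strictly positive. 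Combined with the continuity of $F(\tilde m,\cdot)$ on $[0;+\infty]$ (item 1 above, with boundary values $m_0$), this is the only structural input needed.

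Next I would upgrade this pointwise strict inequality into a \emph{uniform} gap on a compact scale interval, and then transport it to all small diffusivities by periodisation. Fix any $\mu_*>0$ and set $\eta:=\min_{\nu\in[\mu_*/4;\mu_*]}F(\tilde m,\nu)-m_0$. Since $F(\tilde m,\cdot)$ is continuous and strictly larger than $m_0$ on the compact interval $[\mu_*/4;\mu_*]\subset(0;+\infty)$, the minimum is attained and $\eta>0$. Now let $0<\mu\leq\mu_*$ be arbitrary. The numbers $2^{2k}\mu$, $k\in\N$, increase by the factor $4$ at each step, exactly the ratio of the endpoints of $[\mu_*/4;\mu_*]$, so there is a unique integer $k\geq0$ with $2^{2k}\mu\in(\mu_*/4;\mu_*]$. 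Writing $\tilde m_k(x):=\tilde m(2^k x)$ for the reflected-and-periodised rescaling, identity \eqref{Eq:Period} (read as $F(\tilde m_k,\mu)=F(\tilde m,2^{2k}\mu)$) yields $F(\tilde m_k,\mu)=F(\tilde m,2^{2k}\mu)\geq m_0+\eta$. Because reflection and $2$-periodic extension preserve both the pointwise bounds $0\leq \tilde m_k\leq\kappa$ and the mean $\fint_\O \tilde m_k=m_0$, we have $\tilde m_k\in\mathcal M(\O)$, whence $\sup_{m\in\mathcal M(\O)}F(m,\mu)\geq F(\tilde m_k,\mu)\geq m_0+\eta$. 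As $\mu\in(0;\mu_*]$ was arbitrary, taking $\mu_\eta:=\mu_*$ gives \eqref{Eq:L2}.

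The delicate point is not the periodisation identity itself but the passage from the \emph{discrete} set of diffusivities $\{2^{-2k}\mu_*\}_{k\in\N}$, at which \eqref{Eq:Period} delivers the bound for free, to the full \emph{continuum} $(0;\mu_*]$ demanded by the infimum in \eqref{Eq:L2}. This is exactly what the ratio-$4$ window $[\mu_*/4;\mu_*]$ accomplishes: the multiplicative gaps of the rescaling sequence tile $(0;\mu_*]$ by the geometric windows $(\mu_*4^{-k-1};\mu_*4^{-k}]$, so every small $\mu$ can be rescaled by some $2^{2k}$ into the compact interval where $F(\tilde m,\cdot)$ is bounded away from $m_0$. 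The remaining verifications — that $\tilde m_k$ stays in $\mathcal M(\O)$ and that $F(\tilde m,\cdot)$ stays above $m_0+\eta$ on a compact scale interval — are routine consequences of the stated properties of $F$ and of the reflection/periodisation construction.
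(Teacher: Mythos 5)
Your proposal is correct and follows essentially the same strategy as the paper's proof: fix one non-constant admissible $\tilde m$, obtain a uniform gap $\eta>0$ on a compact ratio-$4$ interval of diffusivities using continuity and strict positivity of $F(\tilde m,\cdot)-m_0$, and then use the periodisation identity \eqref{Eq:Period} so that the geometric windows $(\mu_*4^{-k-1};\mu_*4^{-k}]$ tile $(0;\mu_*]$ --- the paper phrases this as shrinking intervals $I_k$ covering $(0;4\underline\mu)$, which is the same argument read in the opposite direction. The only cosmetic difference is that you derive $F(\tilde m,\mu)>m_0$ from Lou's energy identity, while the paper directly invokes the recalled fact that $m_0$ is a strict global minimiser of $F(\tilde m,\cdot)$ attained only at $\mu=0$ and $\mu=\infty$.
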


\begin{proof}[Proof of Lemma \ref{Le:2}]

Let $m\in \mathcal M(\O)$ be any non-constant admissible resources distribution. We know that $m_0$ is a strict local minimum of $F(m,\cdot)$ on $[0;+\infty)$, and that $\mu \mapsto F(m,\mu)$ is continuous on $[0;+\infty)$.  Let $\underline \mu>0$ be a real number and consider the interval
$$I_0:=\left[\underline \mu,4\underline\mu\right].$$

Since $m_0$ is only reached at $\mu=0$ and $\mu=\infty$, it follows that 
$$\inf_{\mu \in I_0}F(m,\mu)>m_0.$$ Thus, let $\eta>0$ be such that
$$\inf_{\mu \in I_0}F(m,\mu)\geq m_0+\eta.$$
We then consider, for any $k\in \N$, the interval
$$I_k:=\left[\frac{\underline\mu}{2^{2k}}, \frac{4\underline\mu}{ 2^{2k}}\right].$$ 

We first remark that, once again setting $m_k(\cdot)=m(2^k\cdot)$ and, thanks to \eqref{Eq:Period}, we have 
$$F(m_k,\cdot)(I_k)=F(m,\cdot)(I_0)$$ so that 
$$\forall k \in \N\,, \inf_{\mu \in I_k}F(m_k,\mu)=\inf_{\mu \in I_0}F(m,\cdot)\geq m_0+\eta.$$
Hence, 
\begin{equation}\label{eq3}\inf_{\mu\in I_k}\left(\sup_{m\in \mathcal M(\O)}F(m,\mu)\right)\geq m_0+\eta.\end{equation}

Now, we have built our sequence in such a way that 
$$\sup(I_{k+1})=\inf(I_k).$$
Hence, setting 
$$I_\infty:=\displaystyle\bigcup_{k=1}^\infty I_k,$$
we can write 
$$I_\infty=\left(0,4\underline \mu\right) $$ and, as a consequence of \eqref{eq3}, 
$$\inf_{\mu \in I_\infty}\left(\sup_{m\in \mathcal M(\O)}F(m,\mu)\right)\geq m_0+\eta.$$

This concludes the proof.

\end{proof}
\begin{remark}\label{Rem}
We can use the same method to prove that $$\overline{\underset{\mu \to 0^+}\lim}\left(\sup_{m\in \mathcal M(\O)}F(m,\mu)\right)=||F||_{L^\infty(\mathcal M(\O)\times \R_+)}.$$
Indeed, consider the problem
$$\sup_{\mu>0,m\in \mathcal M(\O)}F(m,\mu)=F(m^*,\mu^*).$$ Uniqueness does not hold for this problem, because of the periodisation process we used above.

One can actually see that we can choose $\mu^*>0$. In this case, considering the sequence $\left(m_k^*,\frac{\mu^*}{2^{2k}}\right)_{k\in \N}$ immediately gives the result.

\end{remark}
\subsection{The proof}

\begin{proof}[Proof of Theorem \ref{Th:Frag}]
Let $M>0$ be fixed. We are going to prove that there exists $\mu_M>0$ such that, for any $0<\mu\leq \mu_M$, 
\begin{equation}\label{Eq:goal}
\sup_{m\in \mathcal M_M(\O)}F(m,\mu)<\sup_{m\in \mathcal M(\O)}F(m,\mu).\end{equation}
Theorem \ref{Th:Frag} follows immediately from \eqref{Eq:goal}.

Let $\eta>0$ and $\mu_\eta>0$   (given by Lemma \ref{Le:2}) be fixed throughout the rest of this demonstration:
\begin{equation}\label{Eq:4}
\forall 0<\mu\leq \mu_\eta\,, \sup_{m\in \mathcal M(\O)}F(m,\mu)\geq m_0+\eta.
\end{equation}

From Lemma \ref{Le:Unif}, there exists $\mu_{M,\frac\eta2}>0$ such that, for any $0<\mu\leq \mu_{M,\frac\eta2}$, we have 
$$\sup_{m\in \mathcal M_M(\O)}\Vert \tmm-m\Vert_{L^1(\O)}\leq \frac\eta2.$$
Thus
$$\forall \mu \leq \mu_{M,\frac\eta2}\,, \forall m\in \mathcal M_M(\O)\,, \fint_\O \tmm\leq m_0+\frac\eta2.$$
Plugging this in \eqref{Eq:4} then proves that, for $\mu\leq \min(\mu_\eta,\mu_{M,\frac\eta2})$, no solution $m_\mu^*$ of \eqref{PV} can belong to $\mathcal M_M(\O)$. This concludes the proof.

\end{proof}

\begin{remark}
We quickly comment on the following, expected, remark: not only does the $BV(\O)$-norm blow up, but also, every  $X(\O)$-norm, where $X(\O)$ is compactly embedded in $L^1(\O)$. Indeed, the only part where $BV$ is used is in the proof of Lemma \ref{Le:Unif}, and it is used to get strong $L^1(\O)$ convergence.
\end{remark}

\section{Numerical simulations}\label{Num}\textcolor{black}{
We present several numerical simulations in order to emphasise the results of Theorem \ref{Th:Frag}. All of these simulations were obtained using Ipopt \cite{IPOPT}. These simulations confirm our result. In the one-dimensional case, we even have a stronger information: if we define $m_\mu^*$ as the optimal resources distribution and if we assume (which seems to be validated by numerical simulations) that $m_\mu^*=\mathds 1_{E_\mu^*}$ is  a characteristic function, then Theorem \ref{Th:Frag}, which states that $Per(E_\mu^*)+|E_\mu^*|=\Vert m_\mu^*\Vert_{BV}$ goes to $+\infty$ as $\mu \to 0$, implies that the number of connected components of $E_\mu^*$ goes to $+\infty$. We however note that in the two-dimensional case, such an explosion of the number of connected components is not implied by Theorem \ref{Th:Frag}. It should also be noted that the results of our one-dimensional simulations do not necessarily exhibit periodic structures.}

Let $h>0$ be the discretization parameter. We work with a uniform space discretisation of size $h$.  Since, numerically, such optimisation problems can be very complicated, we run our optimisation program with different initial guesses $\boldsymbol{m}_h^{(1)}, \dots, \boldsymbol{m}_h^{(k)},\dots$ to obtain, for each initial guess, a potential candidate to be the optimiser. We then select, among these candidates, the optimal one by comparing the value of the criteria and, to check our results, we apply a gradient descent as a final step.

The simulations are done in the following way (we only present it in the one-dimensional case):
\begin{itemize}
 \item \textit{ Generating  random initial guesses $(\boldsymbol{\theta}_h^{(k)},\boldsymbol{m}_h^{(k)})$ }. We generate a random sample of initial  guesses $\boldsymbol{m}_h^{(k)}$ by randomising their first five Fourier coefficients on each discretisation interval. In other words, we define
 $$\boldsymbol{m}_h^{(k)}=m_{h,i}^{(k)}\text{ on }I_i:=\left[hi;h(i+1)\right]$$ where each of the $m_{h,i}^{(k)}$ is a random function generated as follows in the one-dimensional case: \begin{equation}
  m_{h,i}^{(k)}=a_0+\sum_{j=1}^5 a_j\sin(j\pi ih)+b_j\cos(j\pi ih)
 \end{equation}
 where $a_j$ and $b_j$ are uniform random variables with values in $[-0.5,0.5]$. To ensure that the resulting function $\boldsymbol{m}_h^{(k)}$ satisfies the constraint $\boldsymbol{m}_h^{(k)}\in \mathcal M(0;1)$, we apply an affine transformation 
 \begin{equation*}
  \begin{cases}
   T\left(\boldsymbol{m}_h^{(k)}\right)=a\boldsymbol{m}_h^{(k)}+b\\
   a=\min\left(\left|\frac{\kappa-m_0}{\max_i(m^{(k)}_{h,i}-\sum_i m^{(k)}_{h,i})}\right|,\left|\frac{-m_0}{\min_i(m^{(k)}_{h,i}-\sum_i m^{(k)}_{h,i})}\right|\right)\\
   b=m_0-a\displaystyle\sum_i m^{(k)}_{h,i}.
  \end{cases}
 \end{equation*}
 
 The resulting function satisfies $\boldsymbol{m}_h^{(k)}\in \mathcal{M}(\Omega)$.
 
 Now, to each of these random initial guess we need to associate an initial guess for the solution of the partial differential equation. We choose an energetic approach: we minimise with Ipopt the discretised energy functional  associated with Equation \eqref{LDE} to obtain $\boldsymbol{\theta}^{(k)}_h$, which is a piecewise constant function: $\boldsymbol{\theta}^{(k)}_h=\theta_{h,i}^{(k)}$ on $[ih;(i+1)h]$; in other words, $\boldsymbol{\theta}^{(k)}_h$ is the minimiser of 
 \begin{equation}\label{numfun}
   J^h(\boldsymbol{\theta}_h)=\frac{\mu}{2}\boldsymbol{\theta}_h\left(-\triangle_h\right)\boldsymbol{\theta}_h-\sum_{i=1}^N \left(\frac{1}{2}\theta_{h,i}^{2}m^{(k)}_{h,i}-\frac{1}{3}\theta_{h,i}^3\right).
 \end{equation}
 where $\triangle_h$ is the discrete Laplacian with Neumann boundary conditions, in 1D:
  \begin{equation}
  \triangle_h:=\frac{1}{h^2}\begin{pmatrix}
                -2 & 2 &0 & 0 & 0 & &\cdots & &0\\
                1 & -2 &1 & 0 & 0 & &\cdots & &0\\
                0 & 1 &-2 & 1 & 0 & &\cdots & &0\\
                  &  & &  &  & & & &\\
                 &  & & \ddots & \ddots &\ddots & & &\\
                 &  & &  &  & & & &\\
                0 & & & \cdots &  & 0& 1& -2&1\\
                0 & & &  \cdots&  & & 0& 2&-2
               \end{pmatrix}
 \end{equation}
 
 In then end, we get an initial random guess for an optimiser, which we denote $(\boldsymbol{\theta}_h,\boldsymbol{m}_h)$.

 \item \textit{Optimisation under a finite difference scheme constraint}  We use Ipopt to maximise the total population $\sum \theta^{(k)}_{h,i}$ for every $k$ with respect to $\boldsymbol{m}_h$. We implement the partial differential equation \eqref{LDE} as a constraint in the scheme:$$\mu(-\triangle_h\boldsymbol{\theta}_{h})_i=\theta_{h,i} m_{h,i}-\theta_{h,i}^2,$$ and, obviously, the constraint  $\boldsymbol{m}\in \mathcal M(0;1)$. Among all random initialisations, we choose the best solution.
 \item \textit{Gradient descent} We recall that, in this context, the adjoint state for the variational problem \eqref{PV} is the function $p \theta_{m,\mu}$ where $p$ solves 
 $$-\mu \Delta p-p(m-2\theta_{m,\mu})=1$$ with Neumann boundary conditions; in other words, for an admissible perturbation $\xi$ at an admissible resources distribution $m$ (i.e. for every $t$ small enough, $m+t\xi\in \mathcal M(0;1)$) the derivative of the criterion at $m$ in the direction $\xi$ is 
 $$\int_\O p\theta_{m,\mu}\xi.$$ We refer to \cite{Ding2010,MNP,NagaharaYanagida}.\color{black}
 
 First compute, with the same space discretisation, the discretised adjoint state $\boldsymbol{p}_h$:
\begin{equation}
 -\mu\triangle_h \boldsymbol{p}_h-diag(\boldsymbol{p}_h)(\boldsymbol{m}_h-2\boldsymbol{\theta}_h)=\boldsymbol{1}
\end{equation}
and we find the admissible perturbation $\boldsymbol{\xi}_h$ that gives the highest rise of the total population via maximizing with Ipopt the following quantity:
 \begin{equation}
  \max_{0\leq m_{h,i}+\xi_{h,i}\leq \kappa, \sum_i \xi_{h,i}=0}\sum_i p_{h,i}\theta_{h,i} \xi_{h,i},
 \end{equation}
 which corresponds to the highest directional derivative with respect to $\boldsymbol{m}$, and we apply the gradient  descent with Armijo rule. We use a classical stopping criterion, and display the results.
\end{itemize}

\subsection{Simulations in the one-dimensional case}

For one-dimensional simulations, we work in 
$$\O=(0;1),$$
with $\kappa=1$ and $N_x=1000$ discretization points
. For each value of the parameter $\mu$, we represent, on the same picture the optimal resources distribution $m_\mu^*$ (the blue zones correspond to $m=1$), which we observe, in each of our case, to be a bang-bang function, and  the corresponding solution $\theta_{m_\mu^*,\mu}$ of \eqref{LDE}.

In order to emphasise the influence of the parameter $m_0$ on the qualitative properties of optimal resources distributions, we present two different values of $m_0$.
 \newpage
\subsubsection{$\kappa=1$, $m_0=0.3$} 

\begin{figure}[h!]
\begin{center}
\includegraphics[width=6cm]{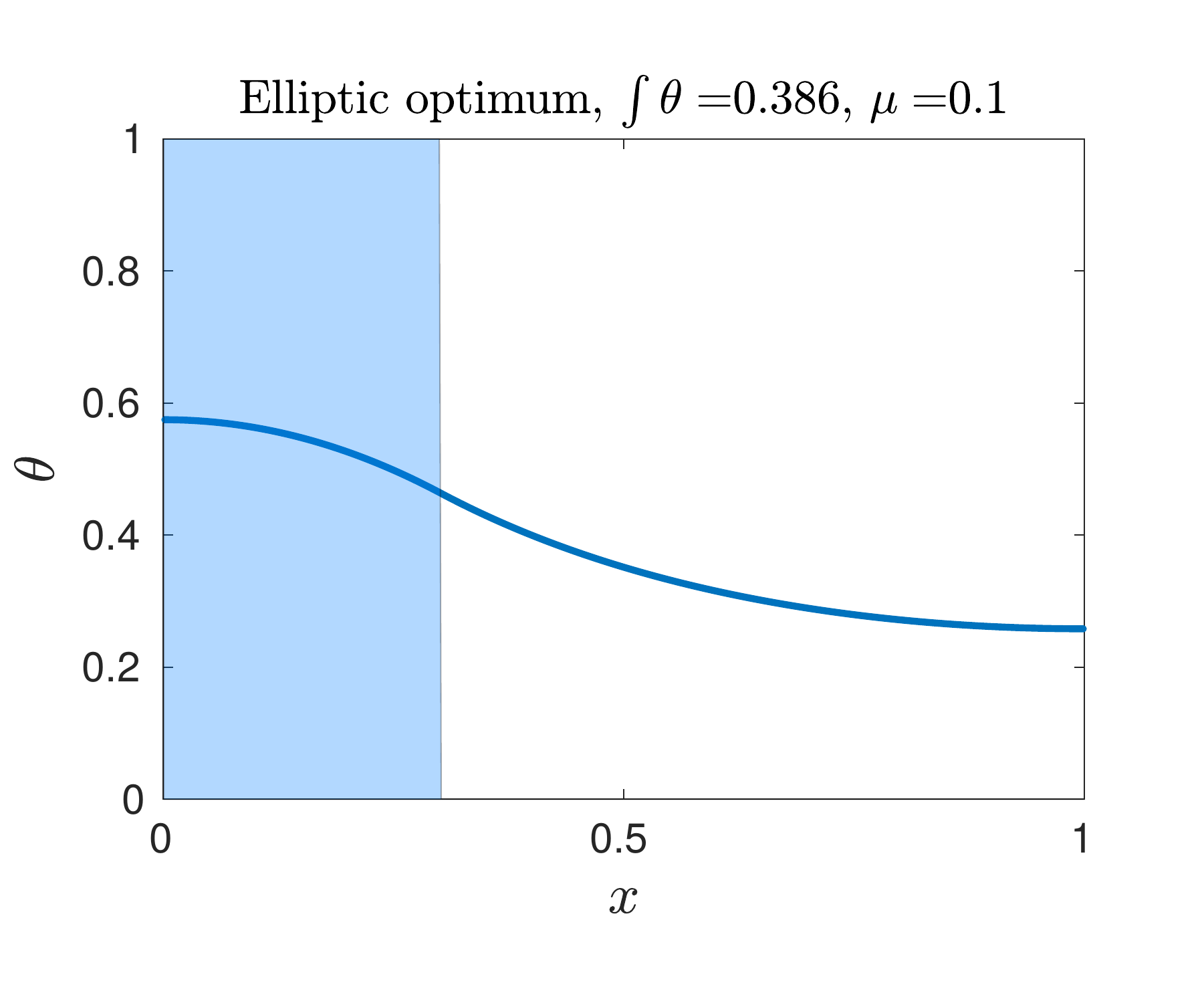}
\includegraphics[width=6cm]{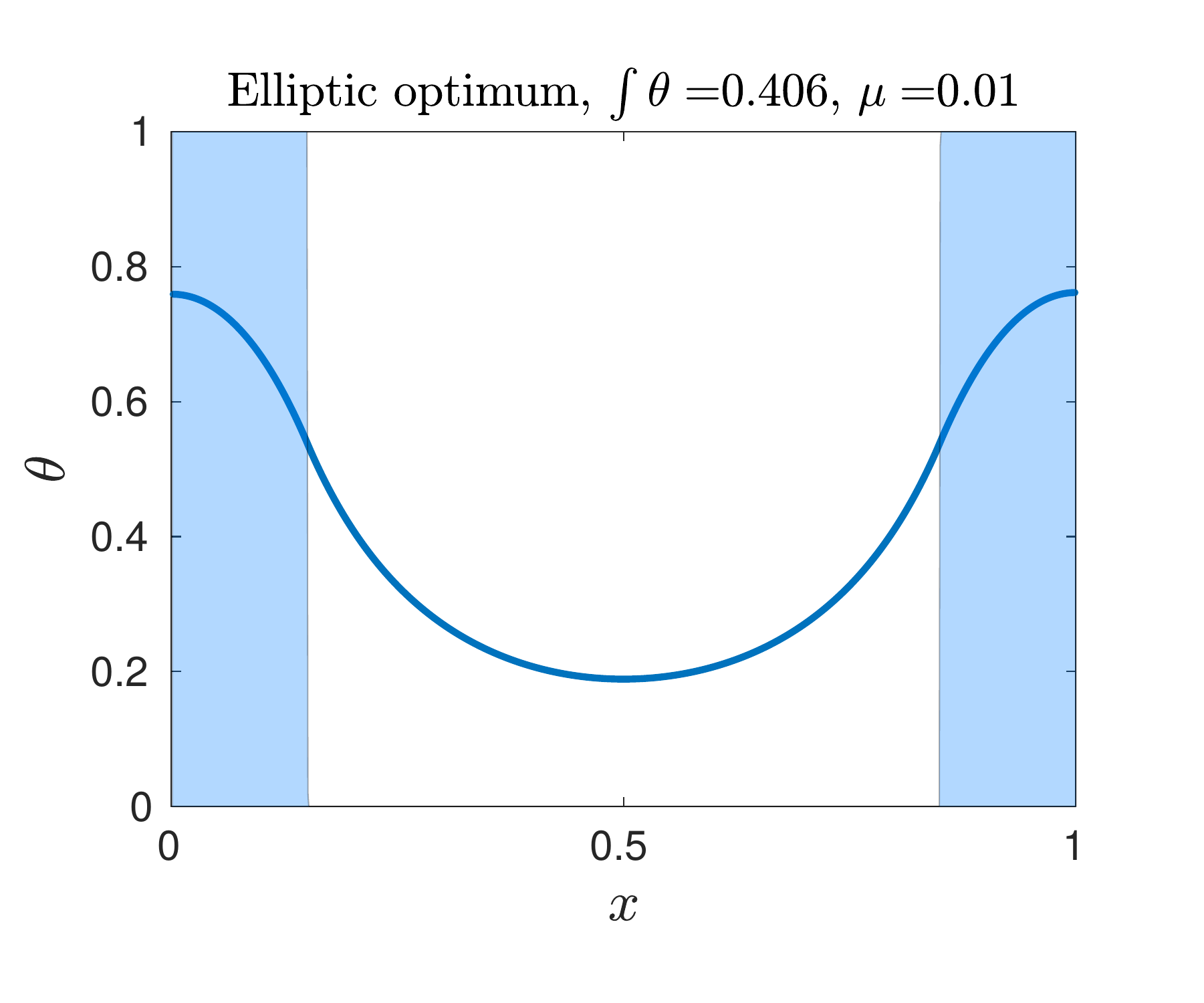}
\caption{}
\end{center}
\end{figure}

\begin{figure}[h!]
\begin{center}
\includegraphics[width=6cm]{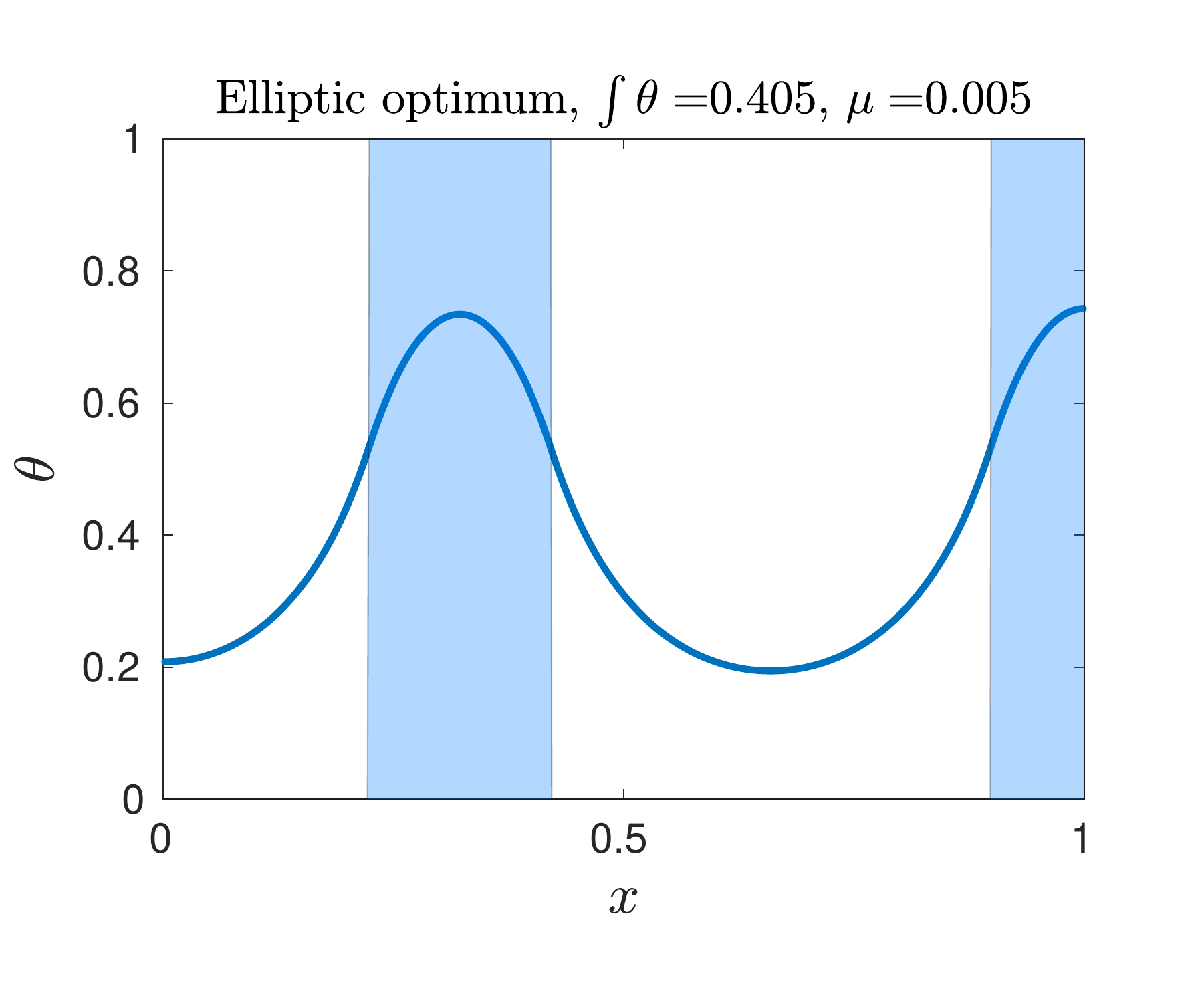}
\includegraphics[width=6cm]{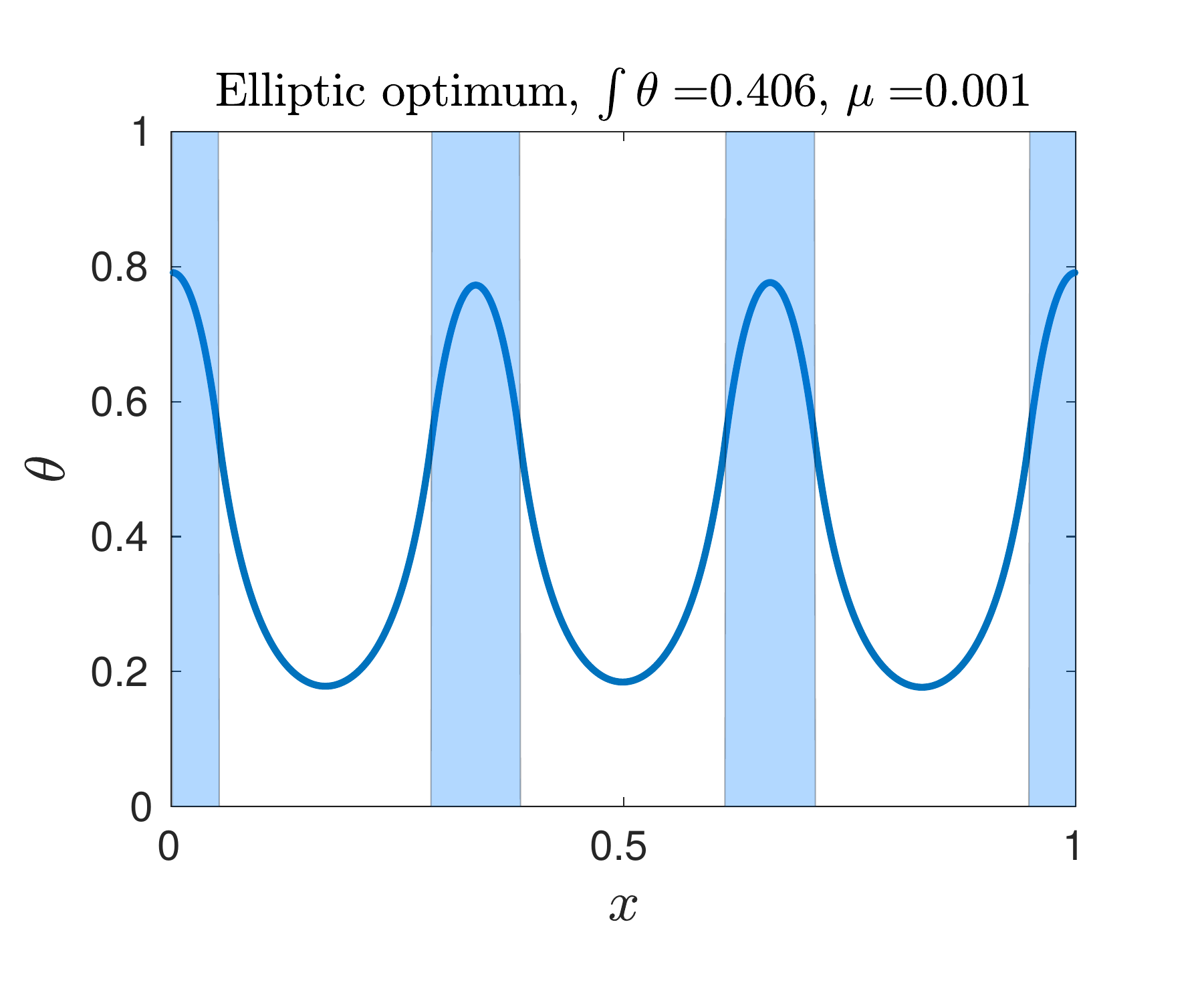}
\caption{}
\end{center}
\end{figure}

\begin{figure}[h!]
\begin{center}
\includegraphics[width=6cm]{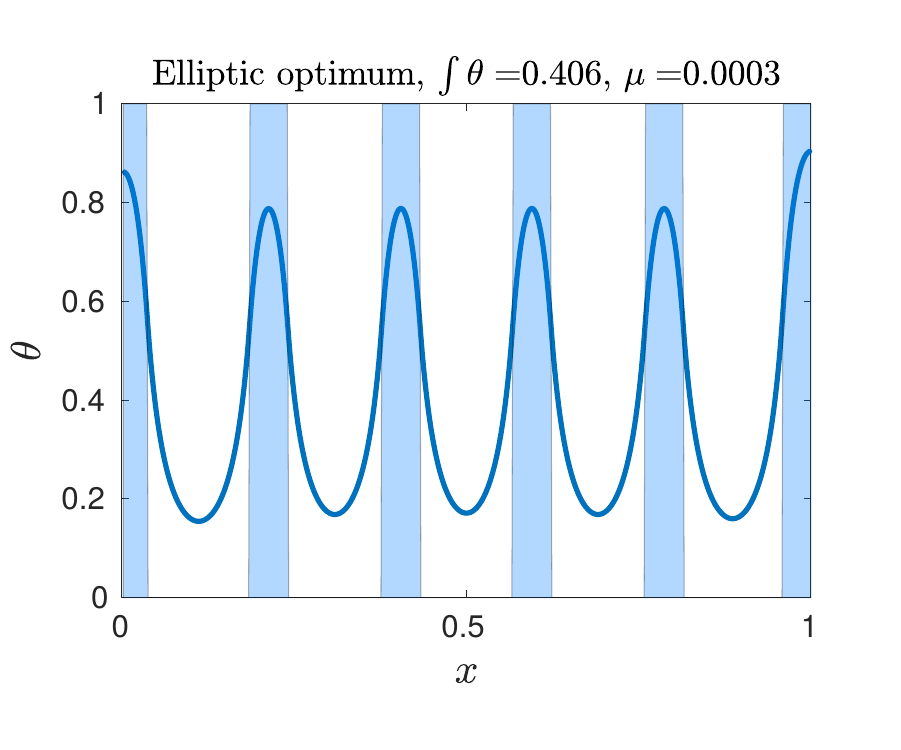}
\includegraphics[width=6cm]{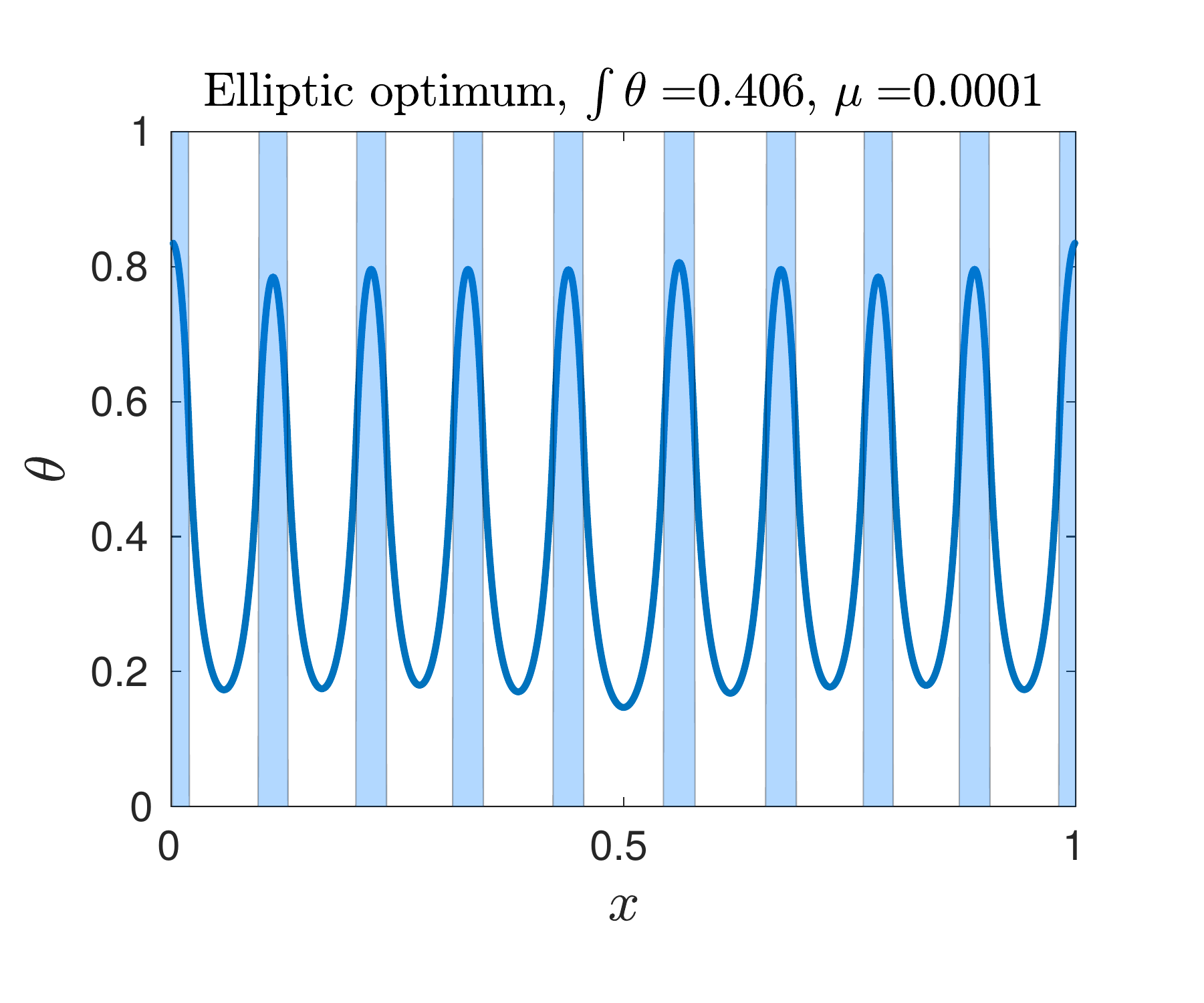}
\caption{}
\end{center}
\end{figure}
\newpage

\subsubsection{$\kappa=1$, $m_0=0.6$} 

\begin{figure}[h!]
\begin{center}
\includegraphics[width=6cm]{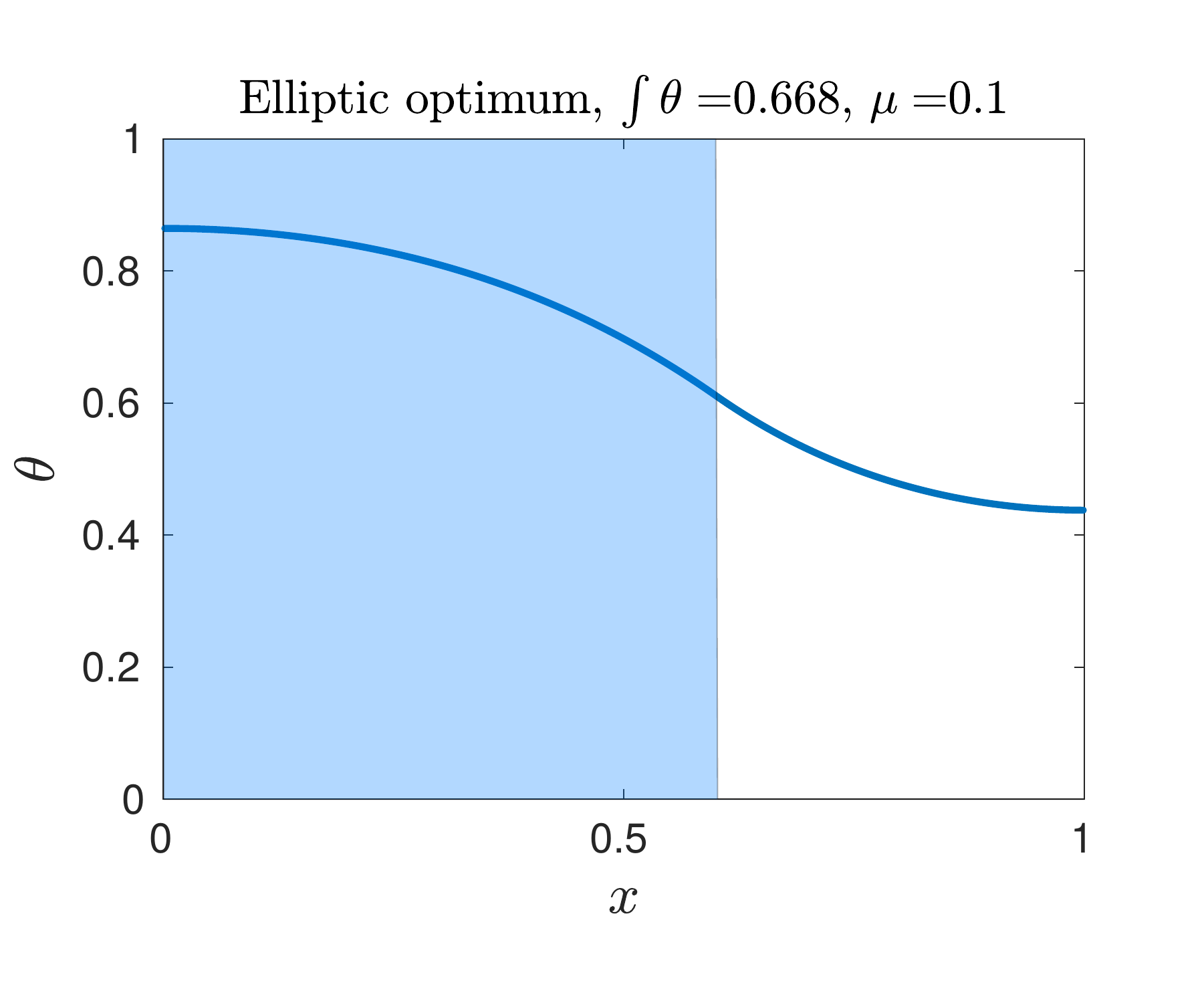}
\includegraphics[width=6cm]{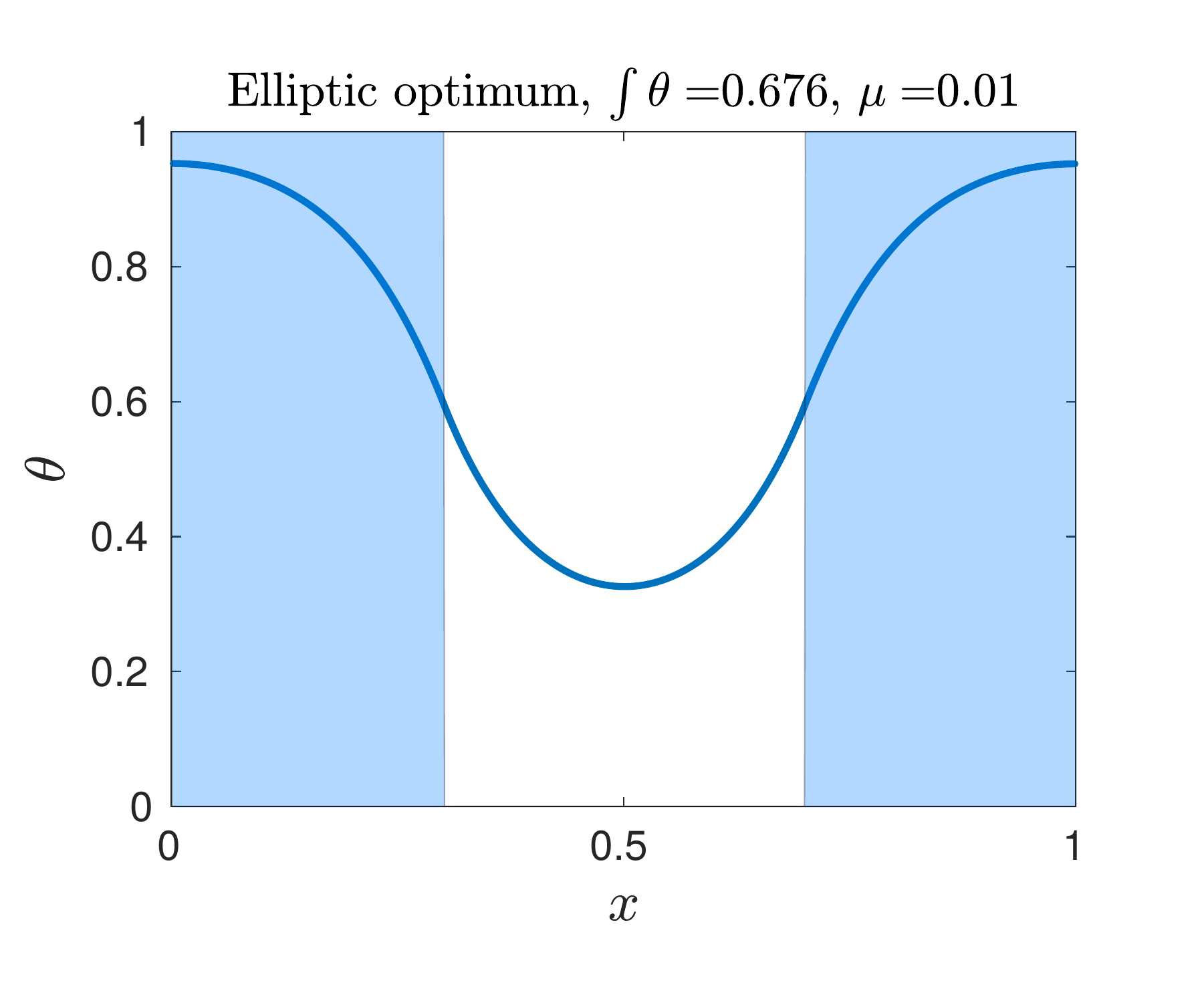}
\caption{}
\end{center}
\end{figure}

\begin{figure}[h!]
\begin{center}
\includegraphics[width=6cm]{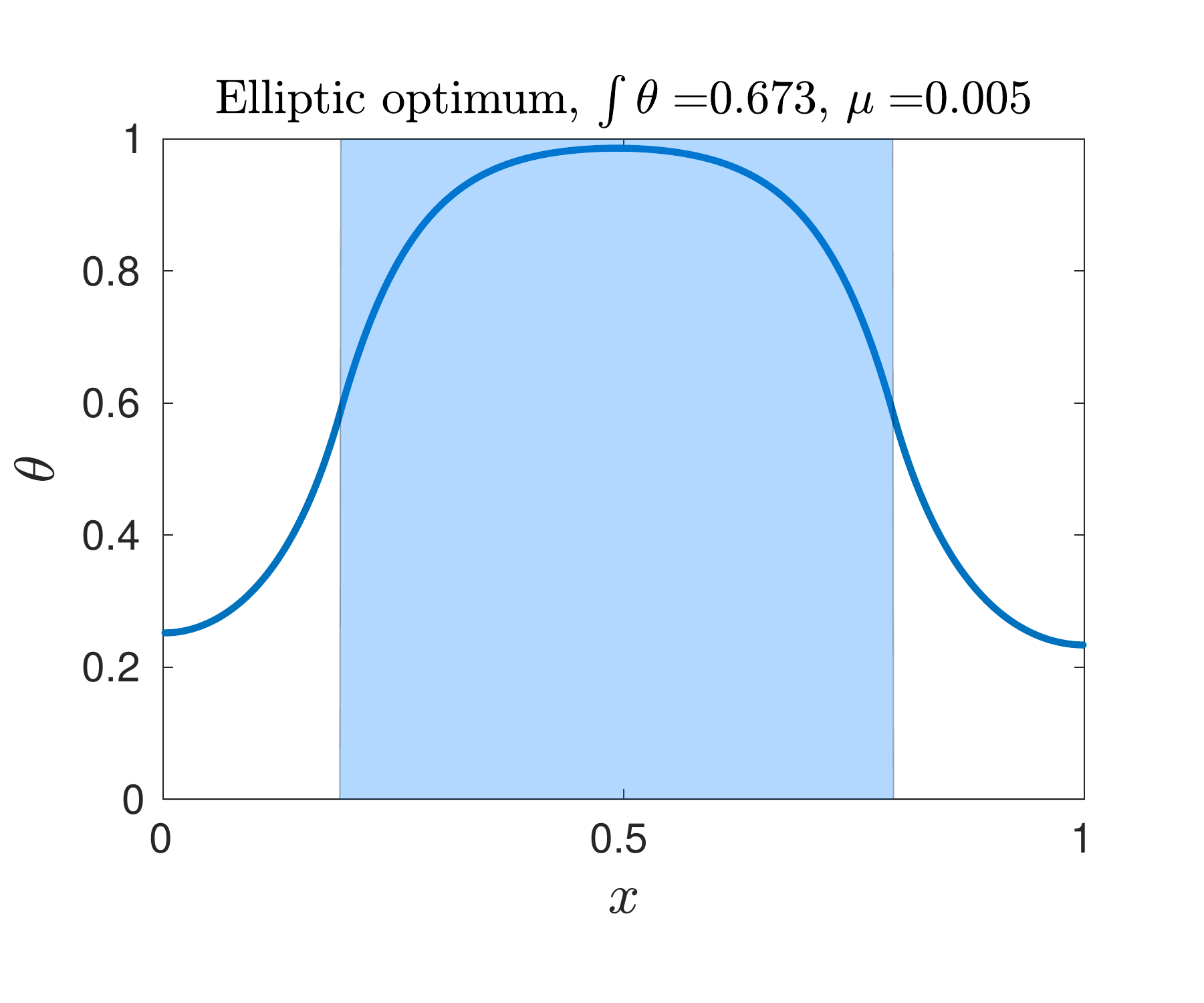}
\includegraphics[width=6cm]{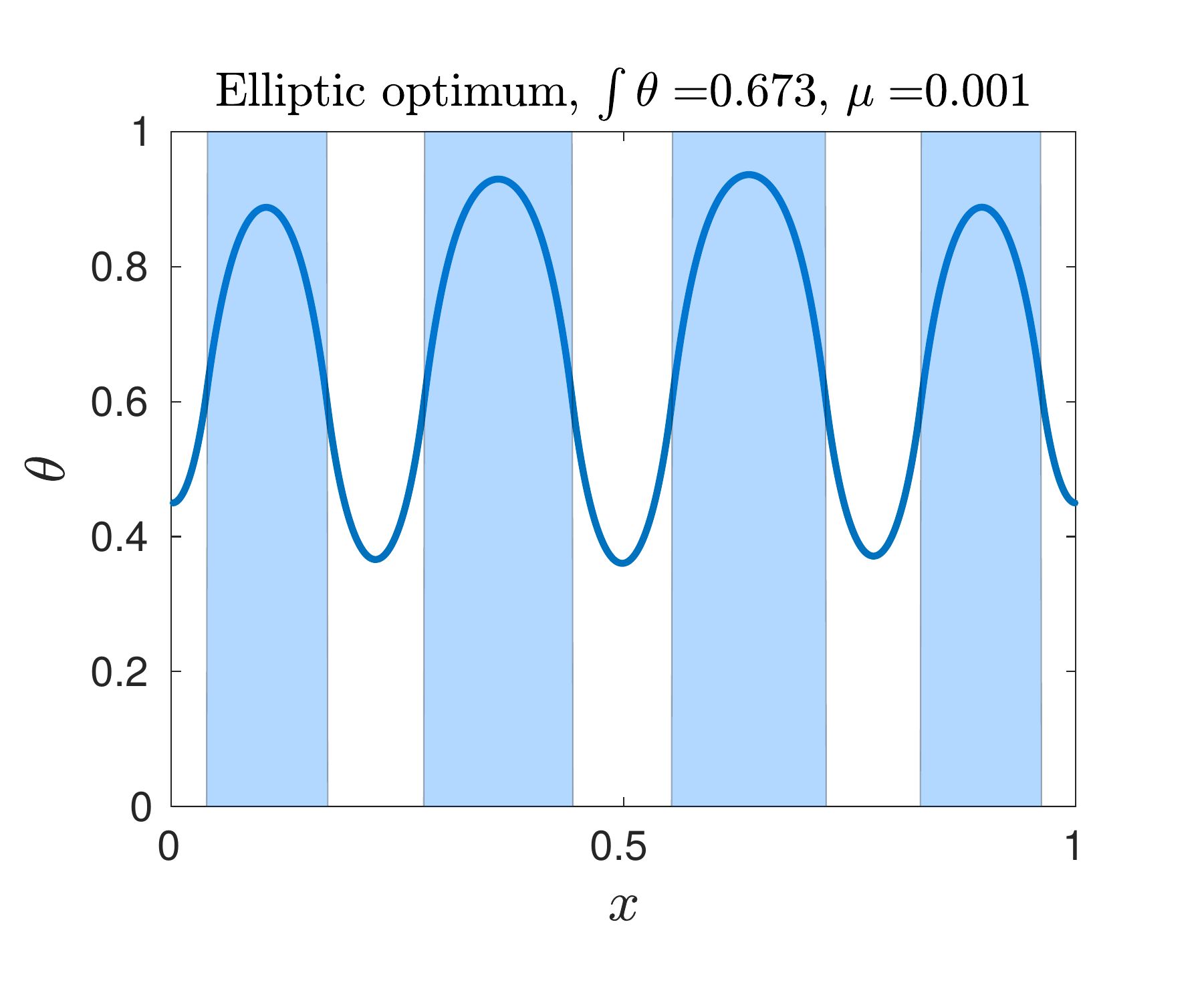}
\caption{}
\end{center}
\end{figure}

\begin{figure}[h!]
\begin{center}
\includegraphics[width=6cm]{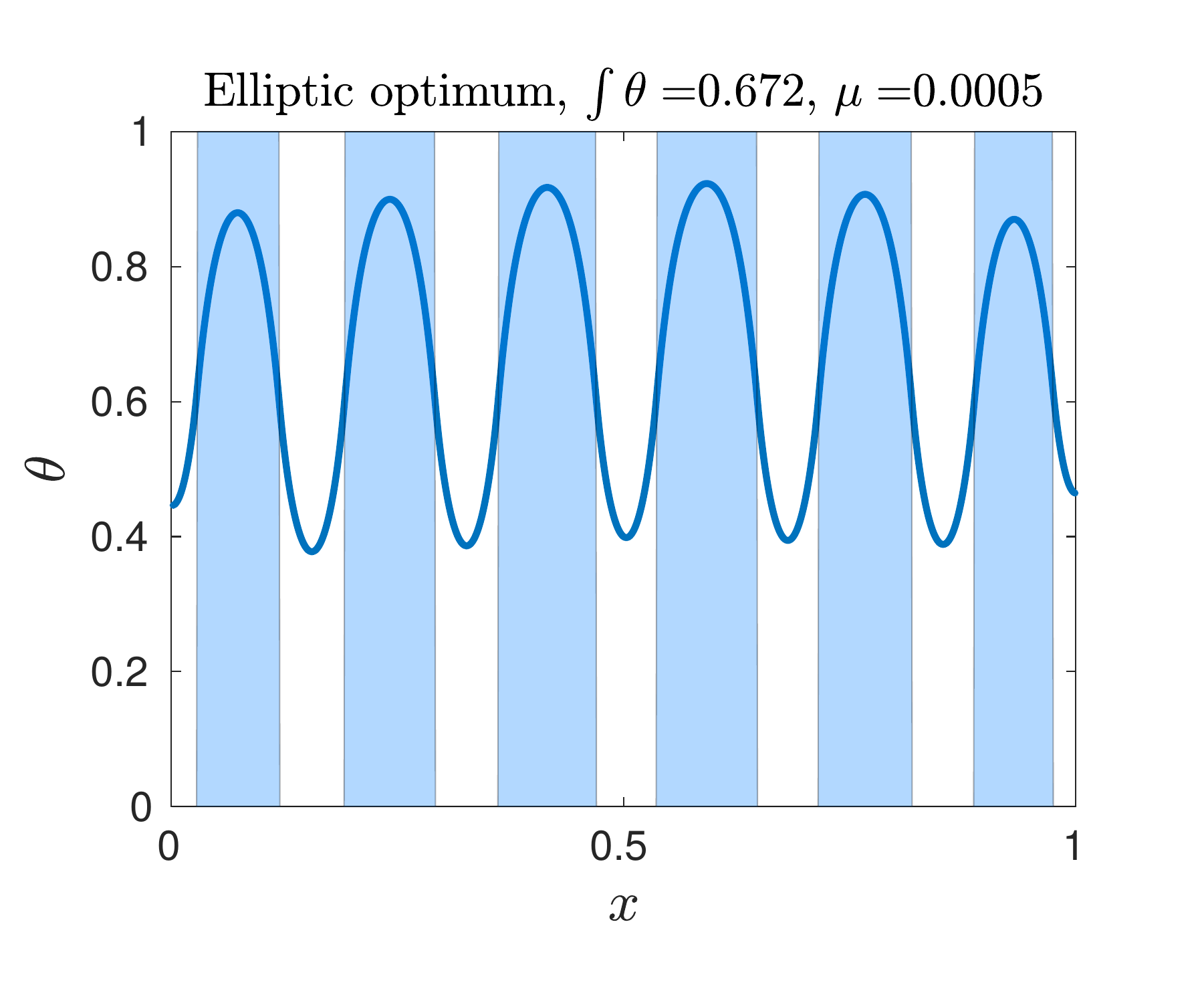}
\includegraphics[width=6cm]{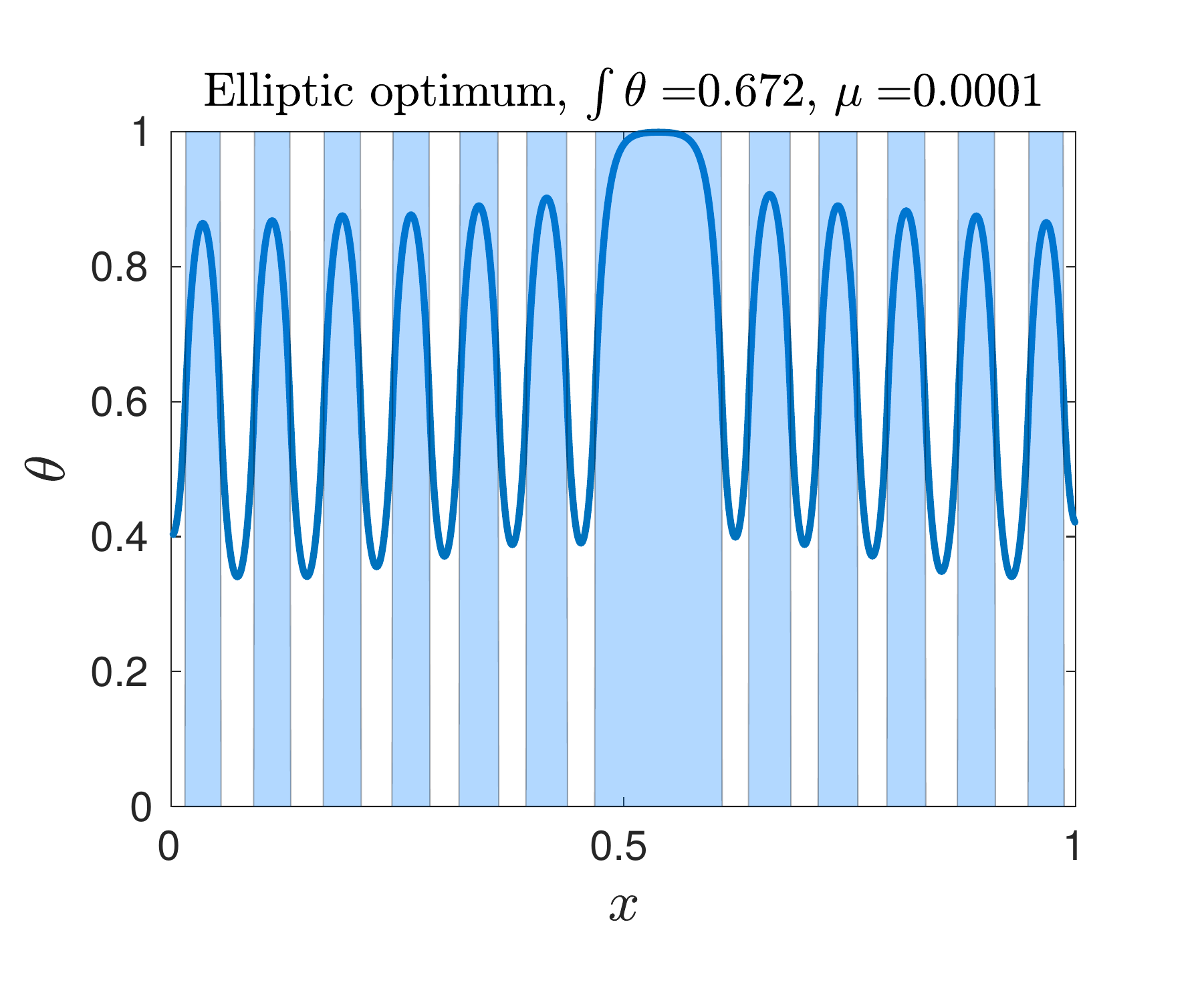}
\caption{}
\end{center}
\end{figure}

\newpage\subsection{Simulations in the two-dimensional case}

For two-dimensional simulations, we work in 
$$\O=[0;1]^2,$$
with $\kappa=1$. For each value of the parameter $\mu$, we represent, on the left picture, the optimal resources distribution $m_\mu^*$, which we observe, in each of our case, to be a bang-bang function. On the right, we represent the corresponding solution $\theta_{m_\mu^*,\mu}$ of \eqref{LDE}.

In order to emphasise the influence of the parameter $m_0$ on the qualitative properties of optimal resources distributions, we present, as in the one-dimensional case, two different cases. We  once again highlight the fact that these simulations prohibit, at a theoretical level, the use of rearrangements to derive qualitative properties but we do notice, in this two dimensional case, the presence of many symmetries. It is a very challenging and interesting  project to obtain symmetry properties for this kind of problems. The number of discretisation points in the $x$ and $y$ variable are $N_x=N_y=60$; the method is otherwise similar to that in the one-dimensional case.

\subsubsection{$\kappa=1$, $m_0=0.3$} 

\begin{figure}[h!]
\begin{center}
\includegraphics[width=6cm]{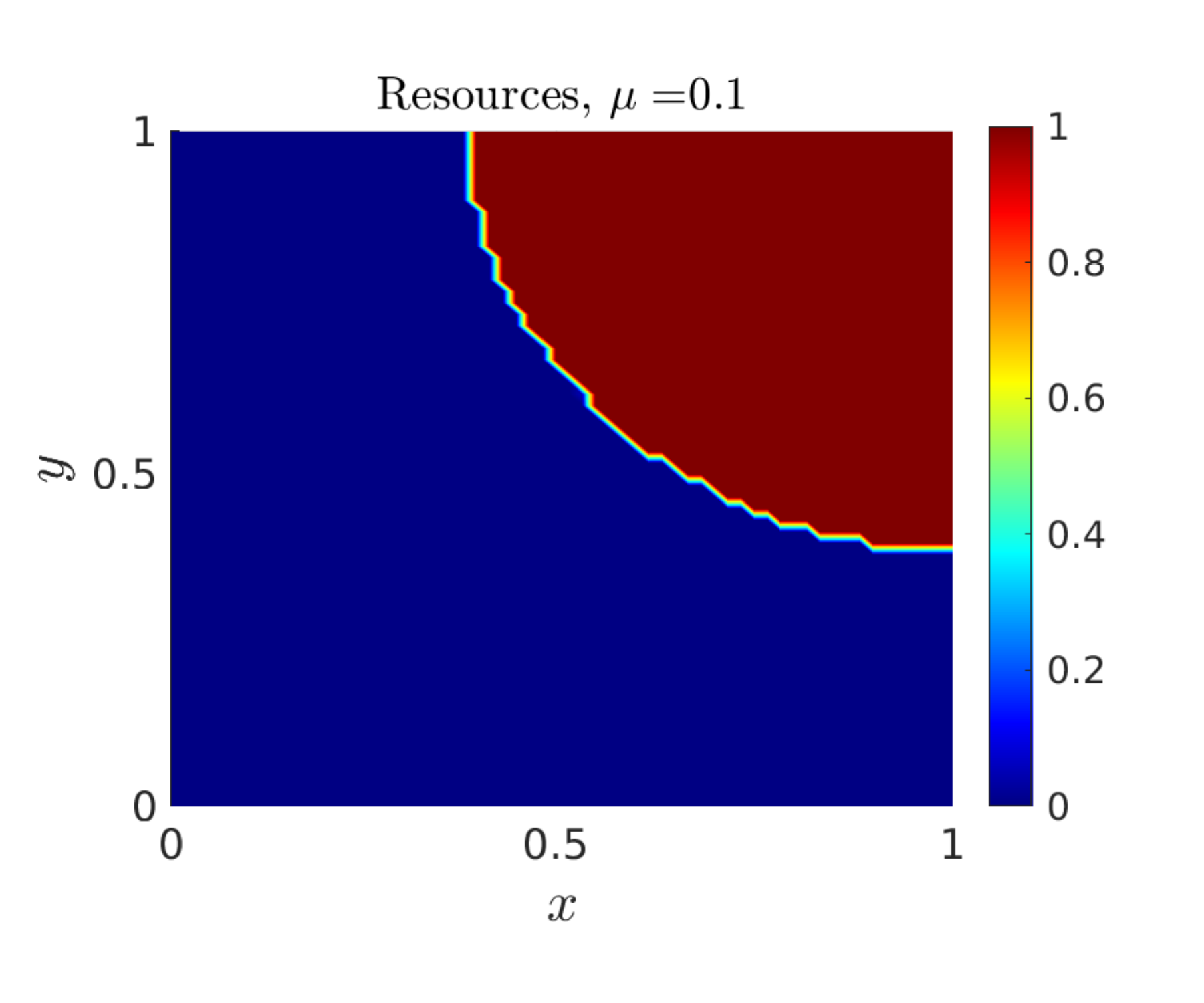}\hspace{1cm}
\includegraphics[width=6cm]{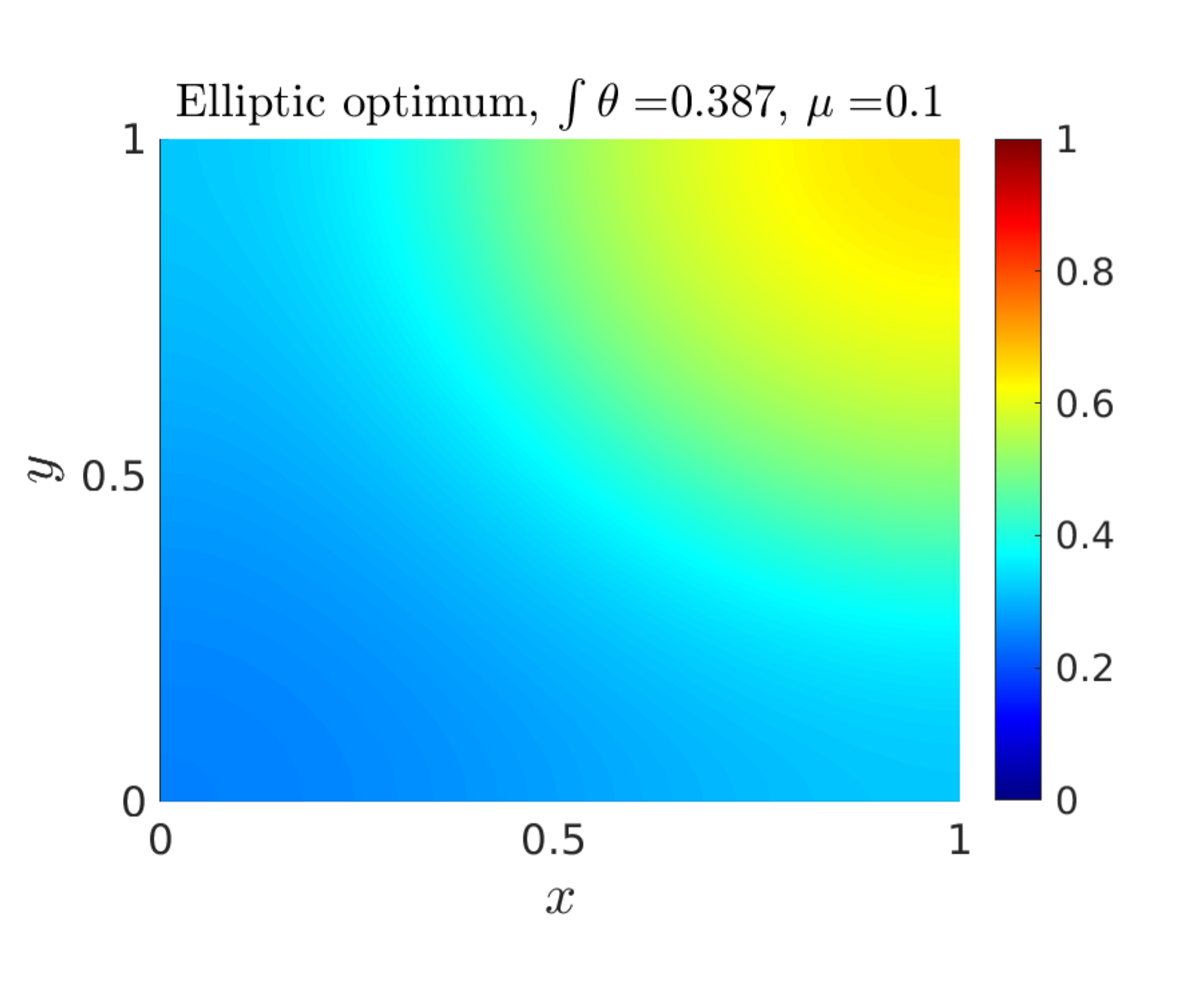}
\caption{}
\end{center}
\end{figure}

\begin{figure}[h!]
\begin{center}
\includegraphics[width=6cm]{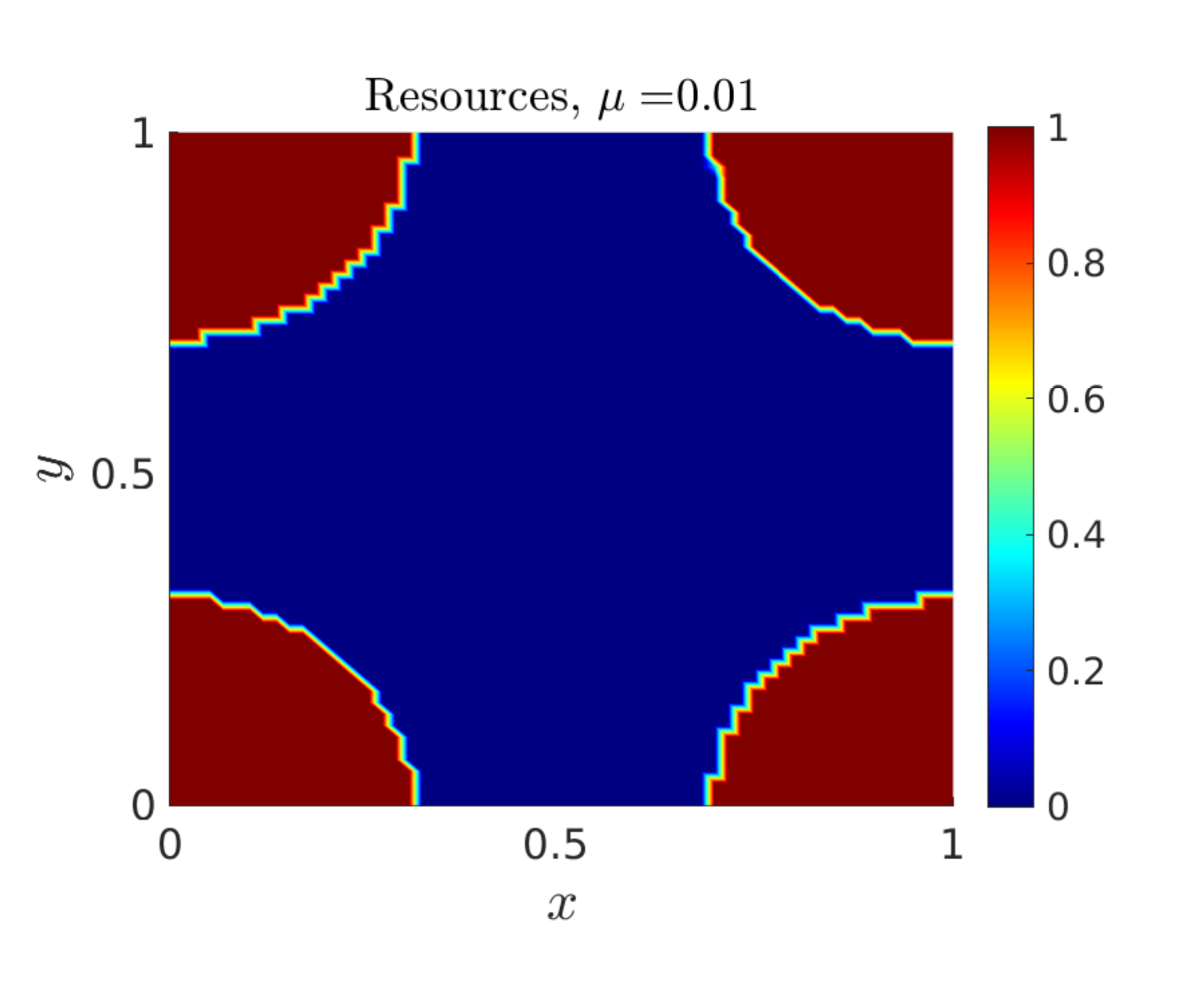}\hspace{1cm}
\includegraphics[width=6cm]{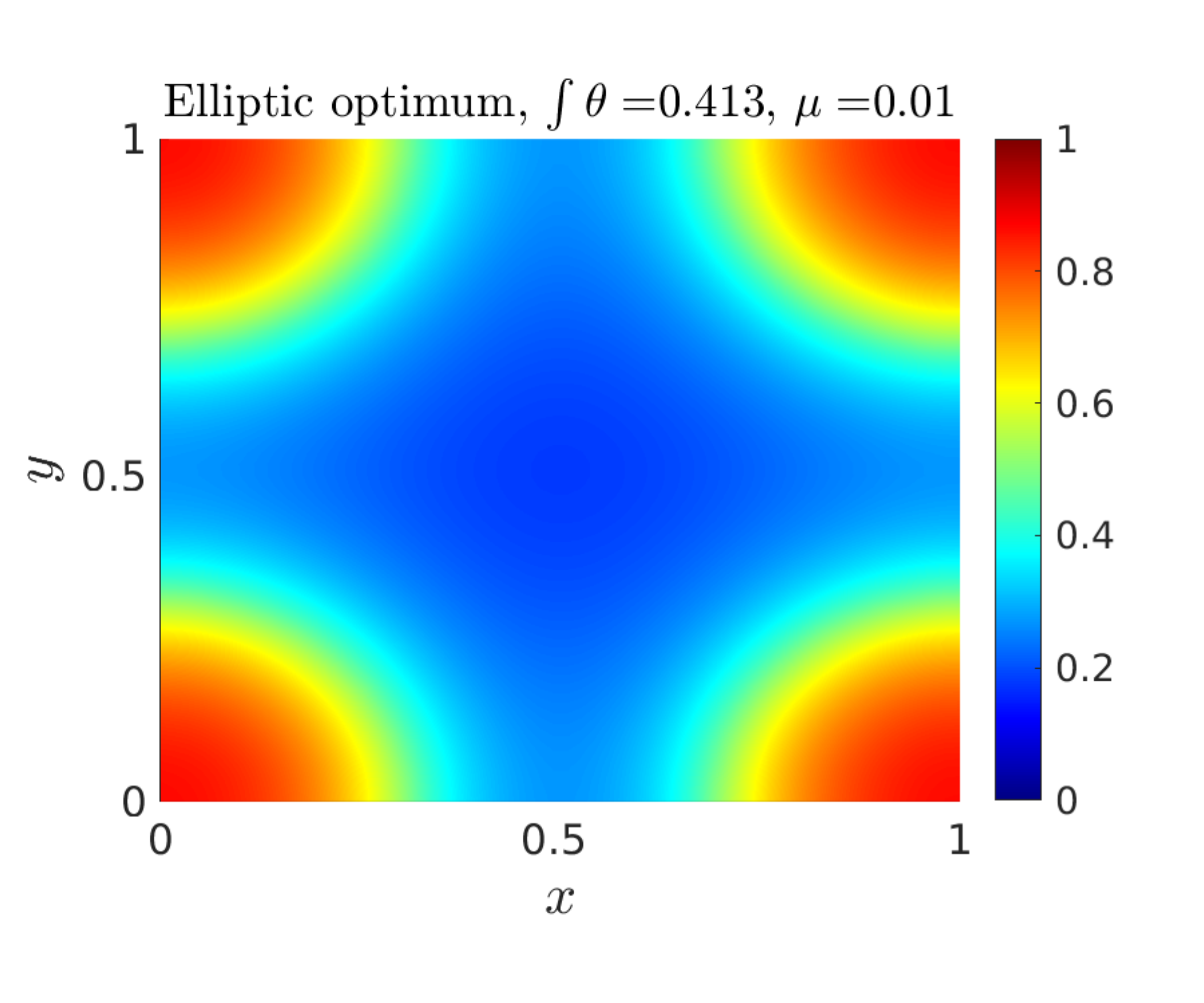}
\caption{}
\end{center}
\end{figure}

\begin{figure}[h!]
\begin{center}
\includegraphics[width=6cm]{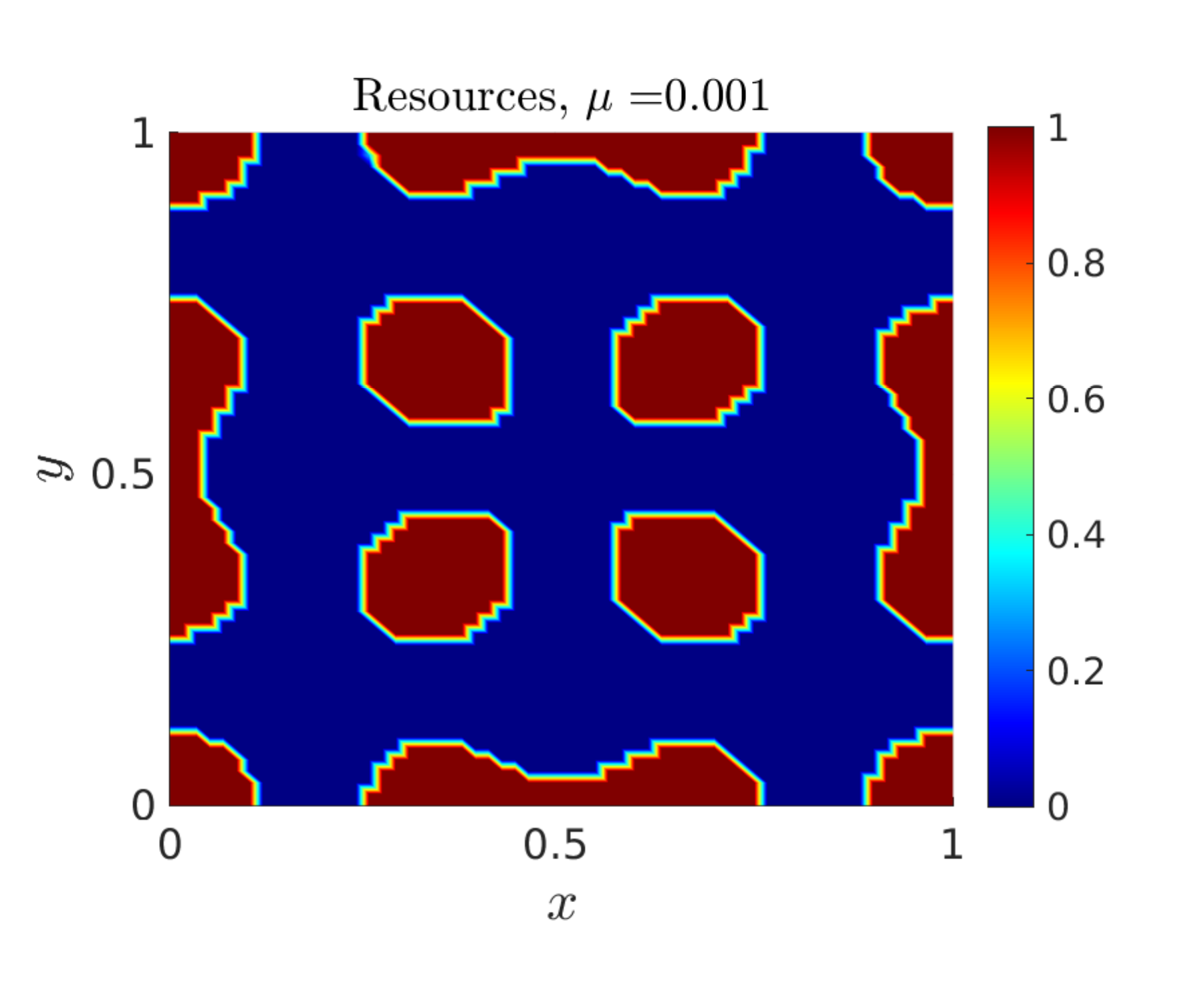}\hspace{1cm}
\includegraphics[width=6cm]{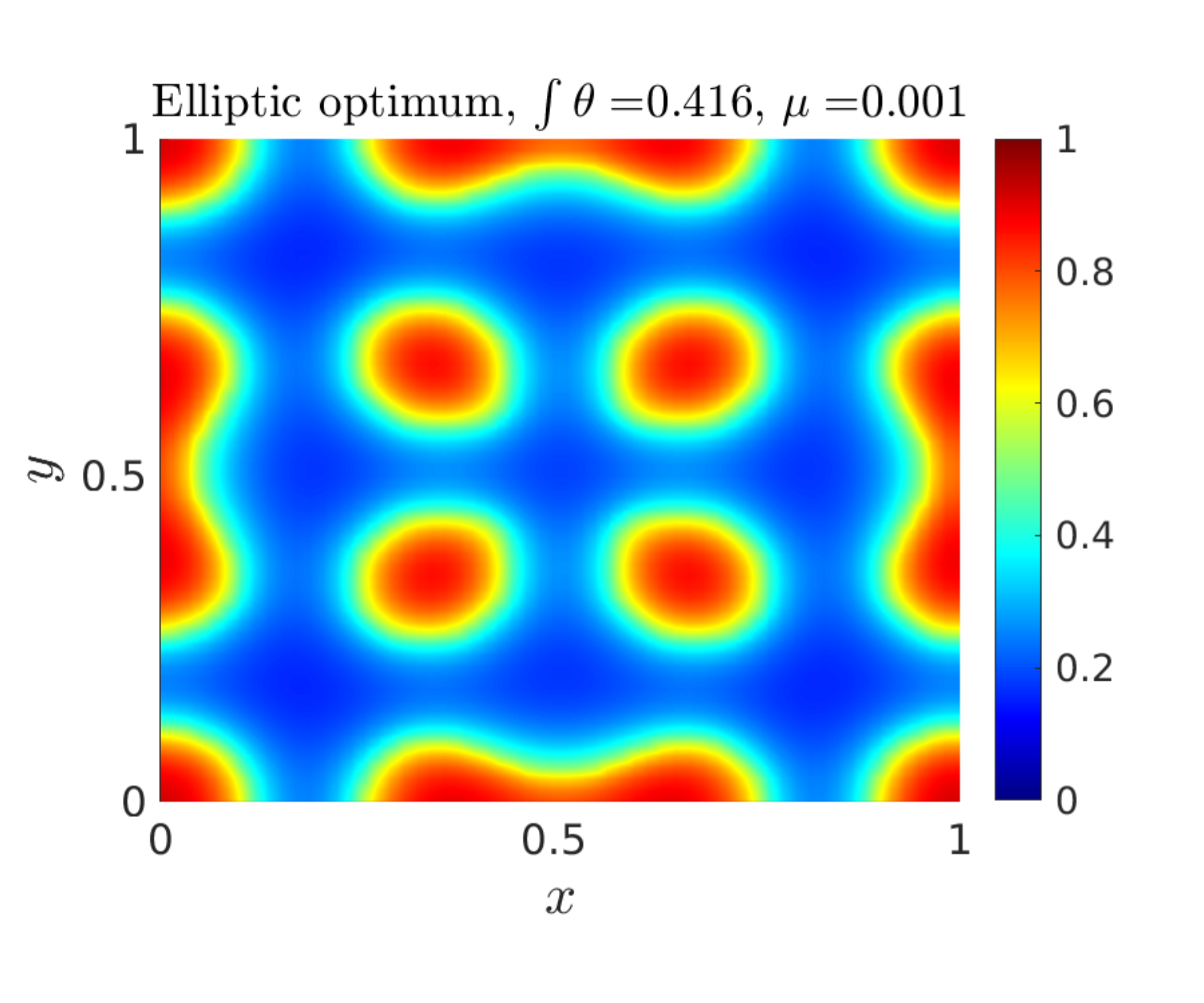}
\caption{}
\end{center}
\end{figure}

\begin{figure}[h!]
\begin{center}
\includegraphics[width=6cm]{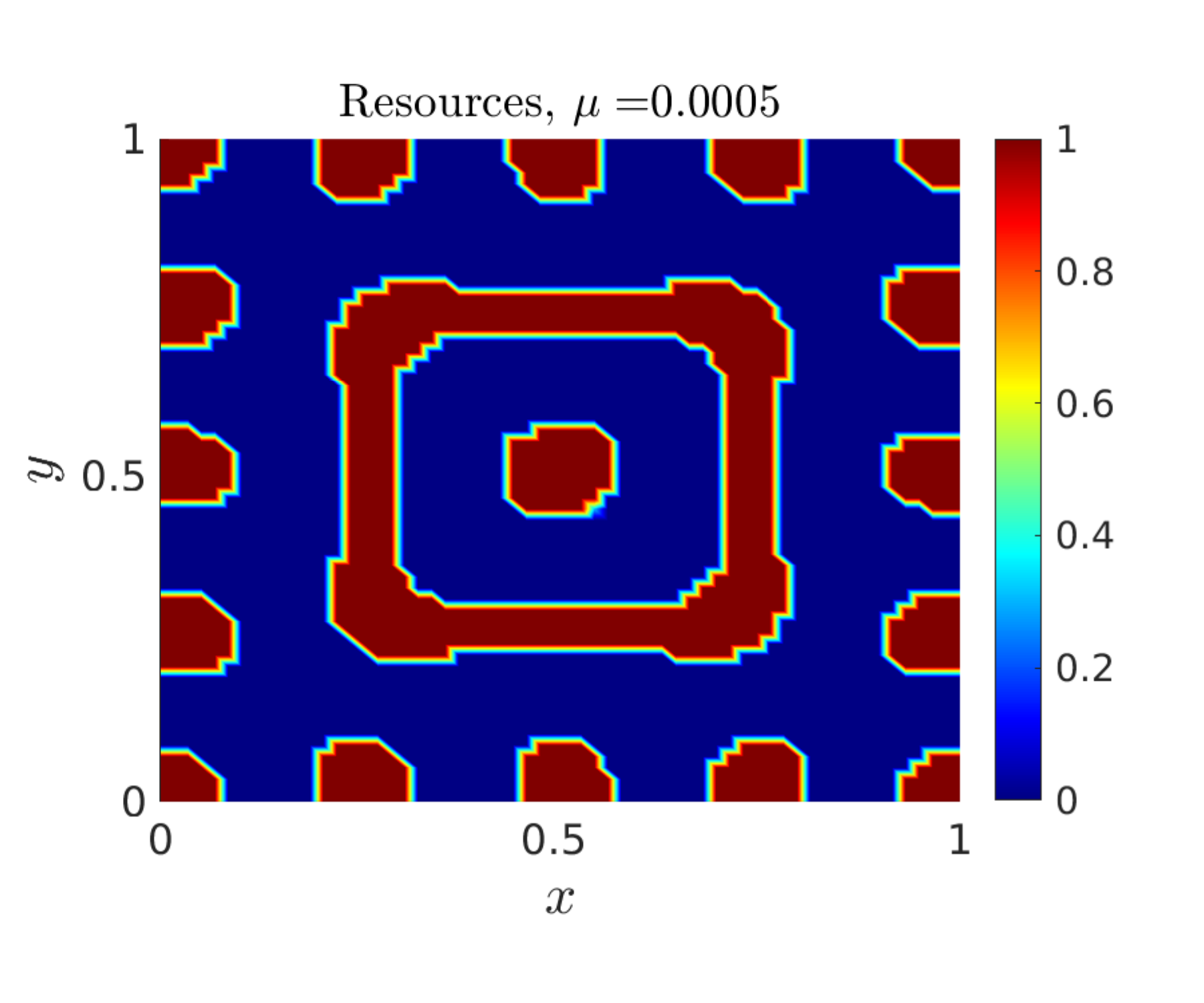}\hspace{1cm}
\includegraphics[width=6cm]{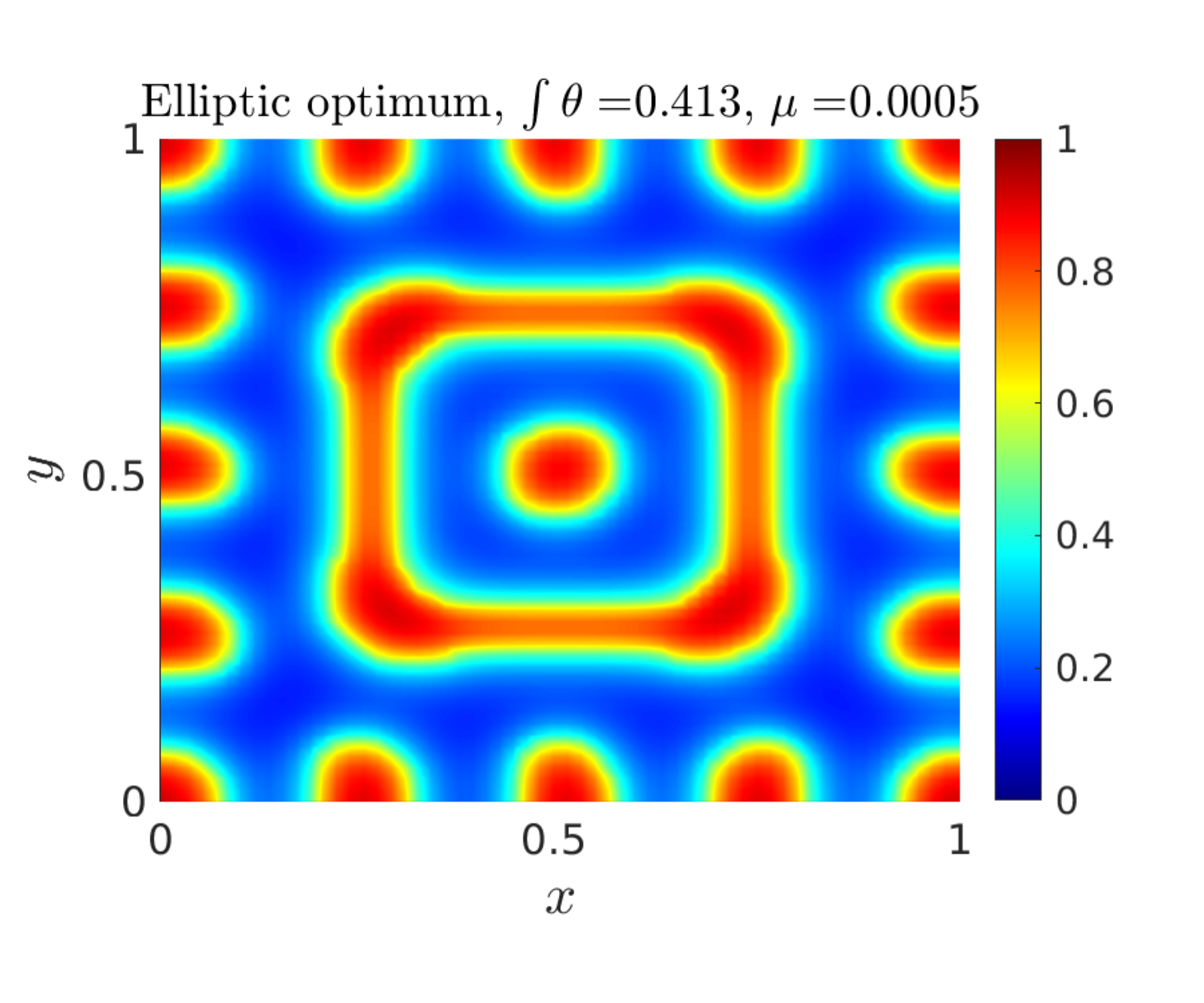}
\caption{}
\end{center}
\end{figure}

\begin{figure}[h!]
\begin{center}
\includegraphics[width=6cm]{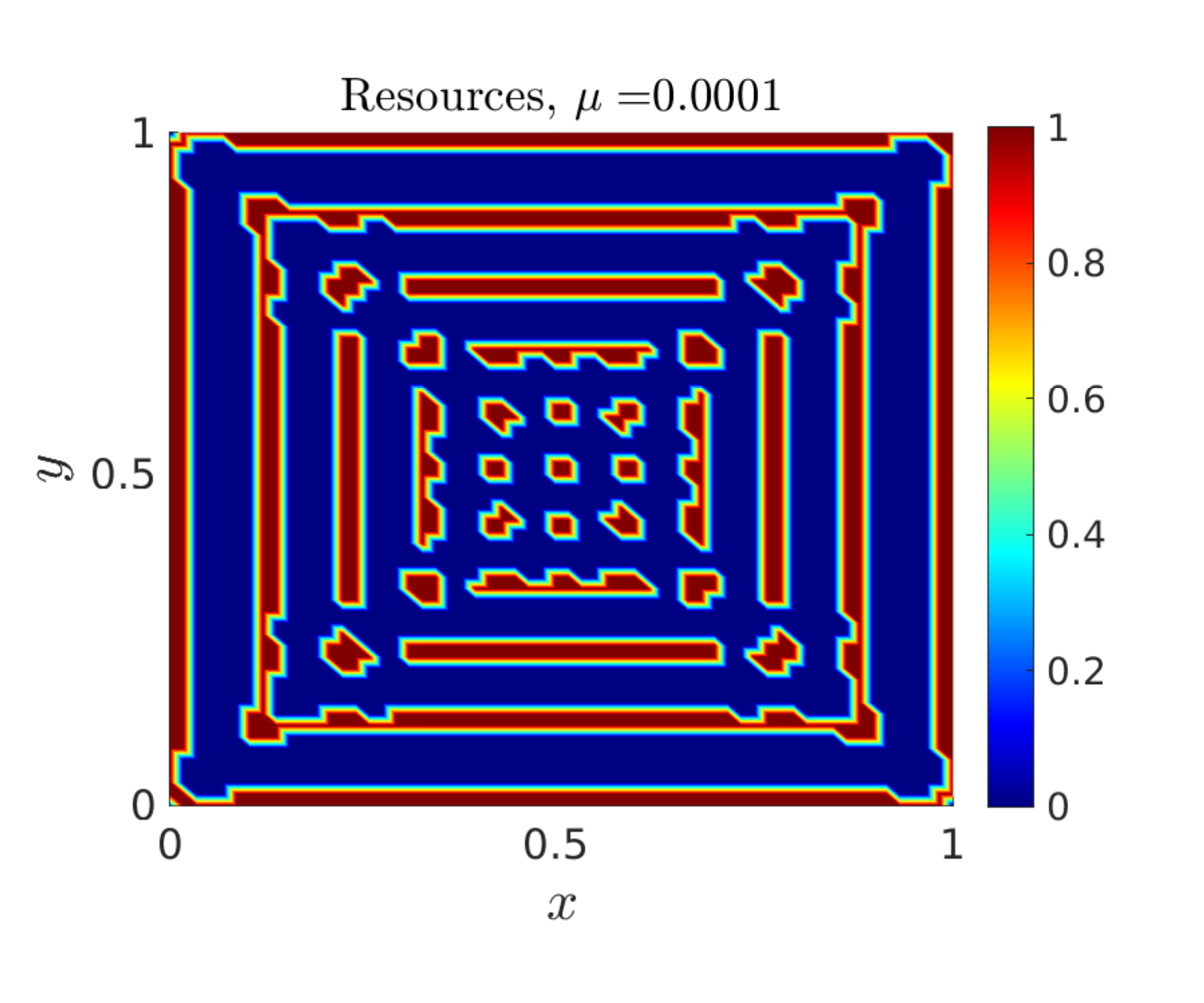}\hspace{1cm}
\includegraphics[width=6cm]{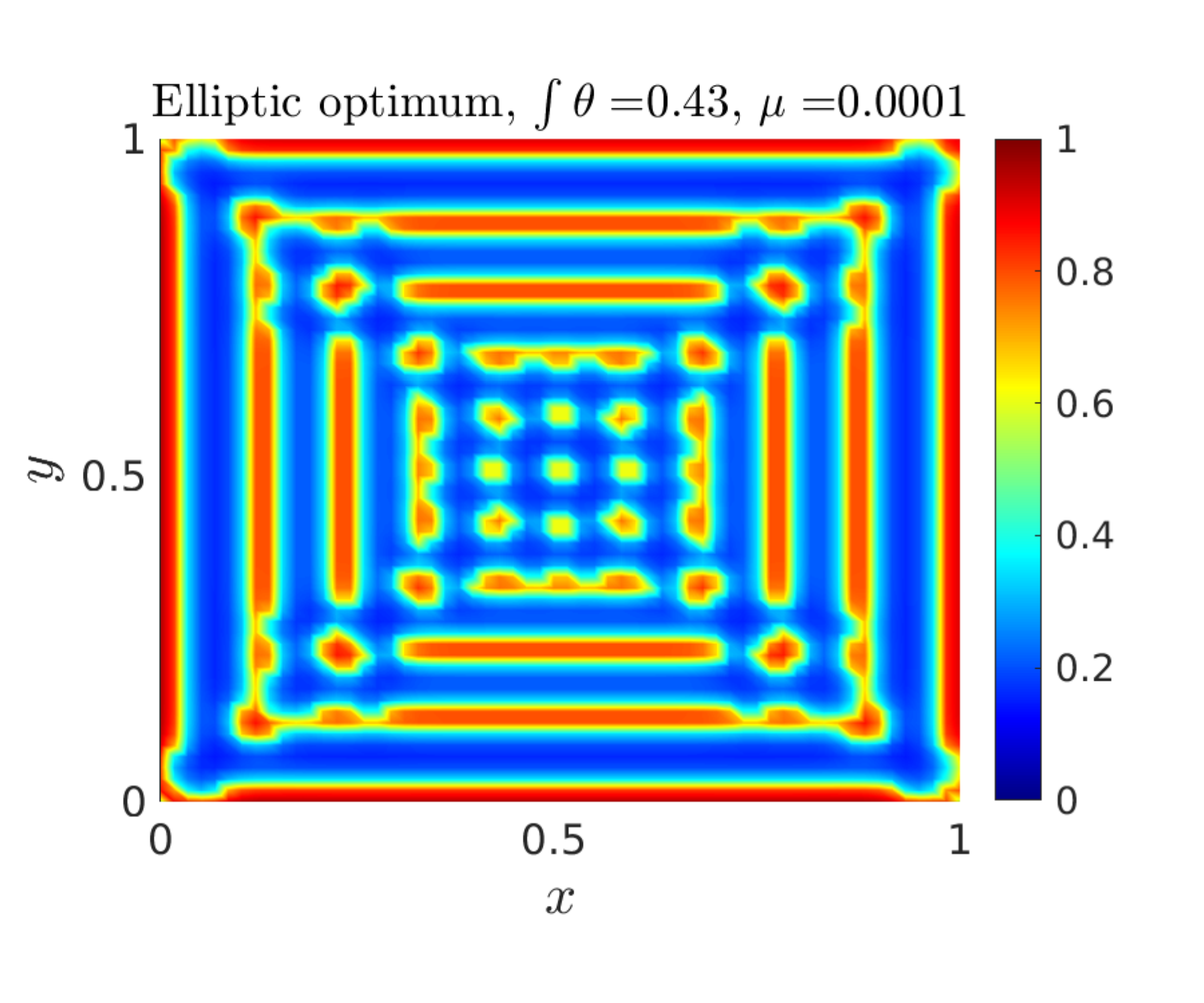}
\caption{}
\end{center}
\end{figure}

\clearpage
\newpage

\subsubsection{$\kappa=1$, $m_0=0.6$} 

\begin{figure}[h!]
\begin{center}
\includegraphics[width=6cm]{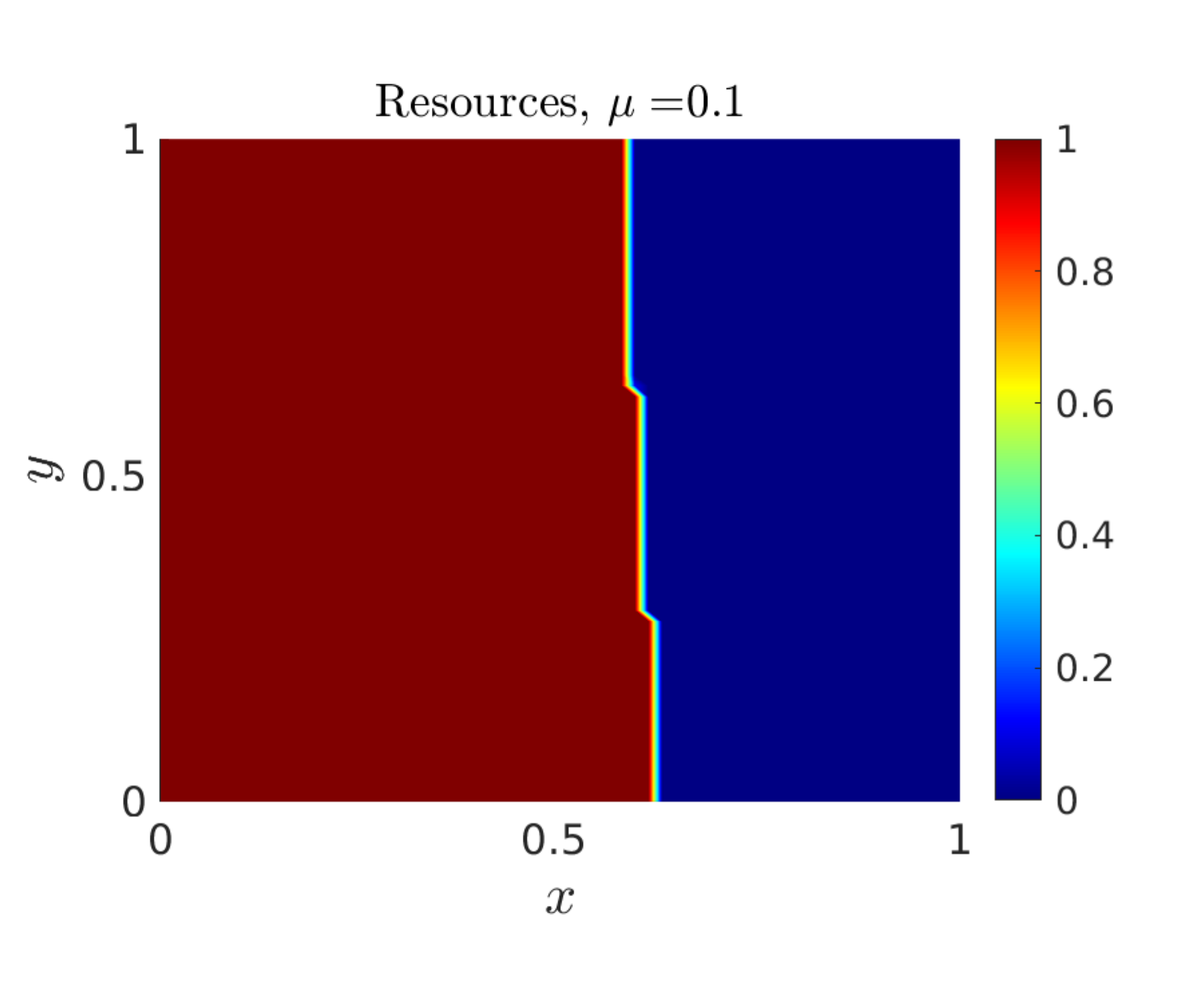}\hspace{1cm}
\includegraphics[width=6cm]{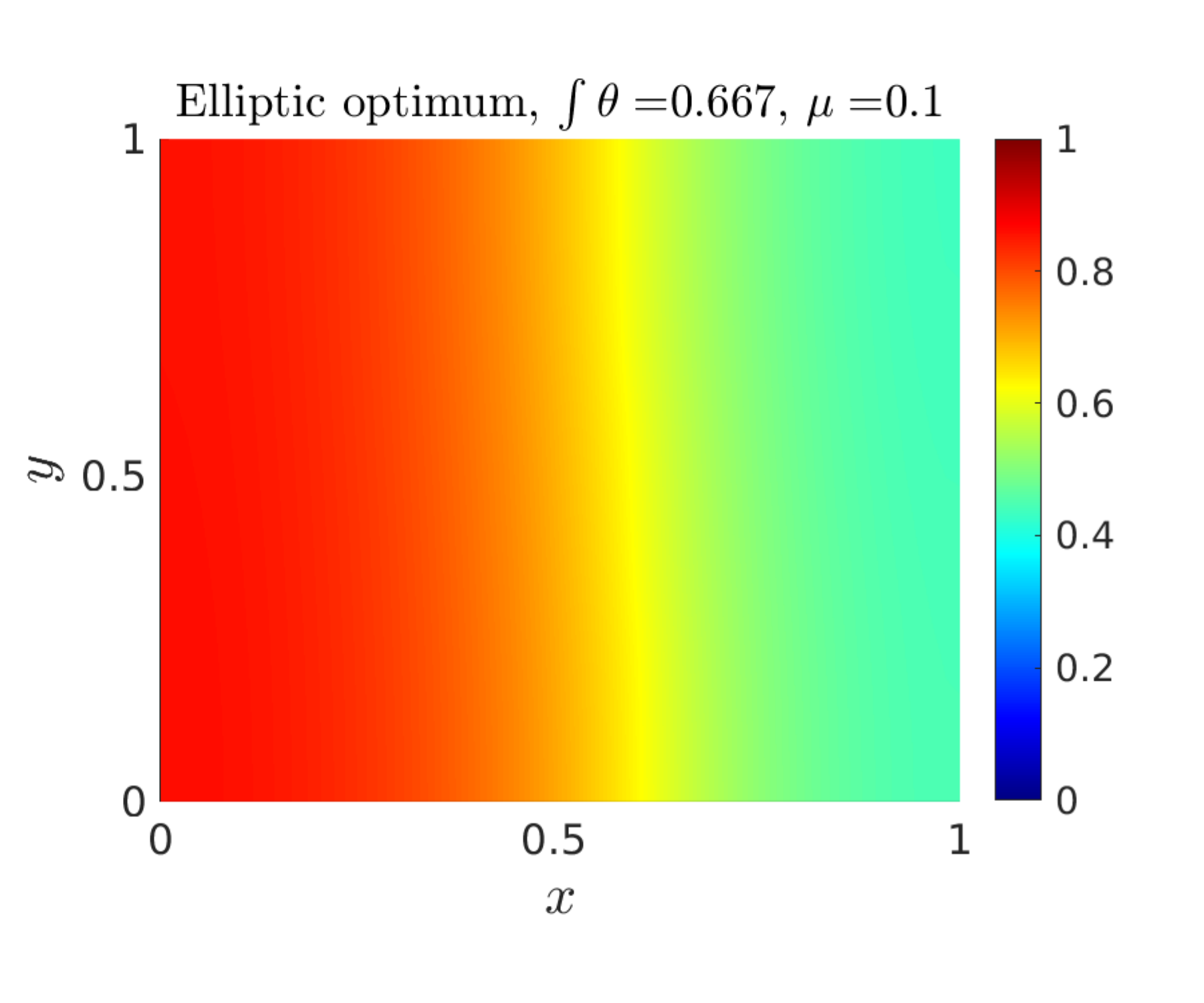}
\caption{}
\end{center}
\end{figure}

\begin{figure}[h!]
\begin{center}
\includegraphics[width=6cm]{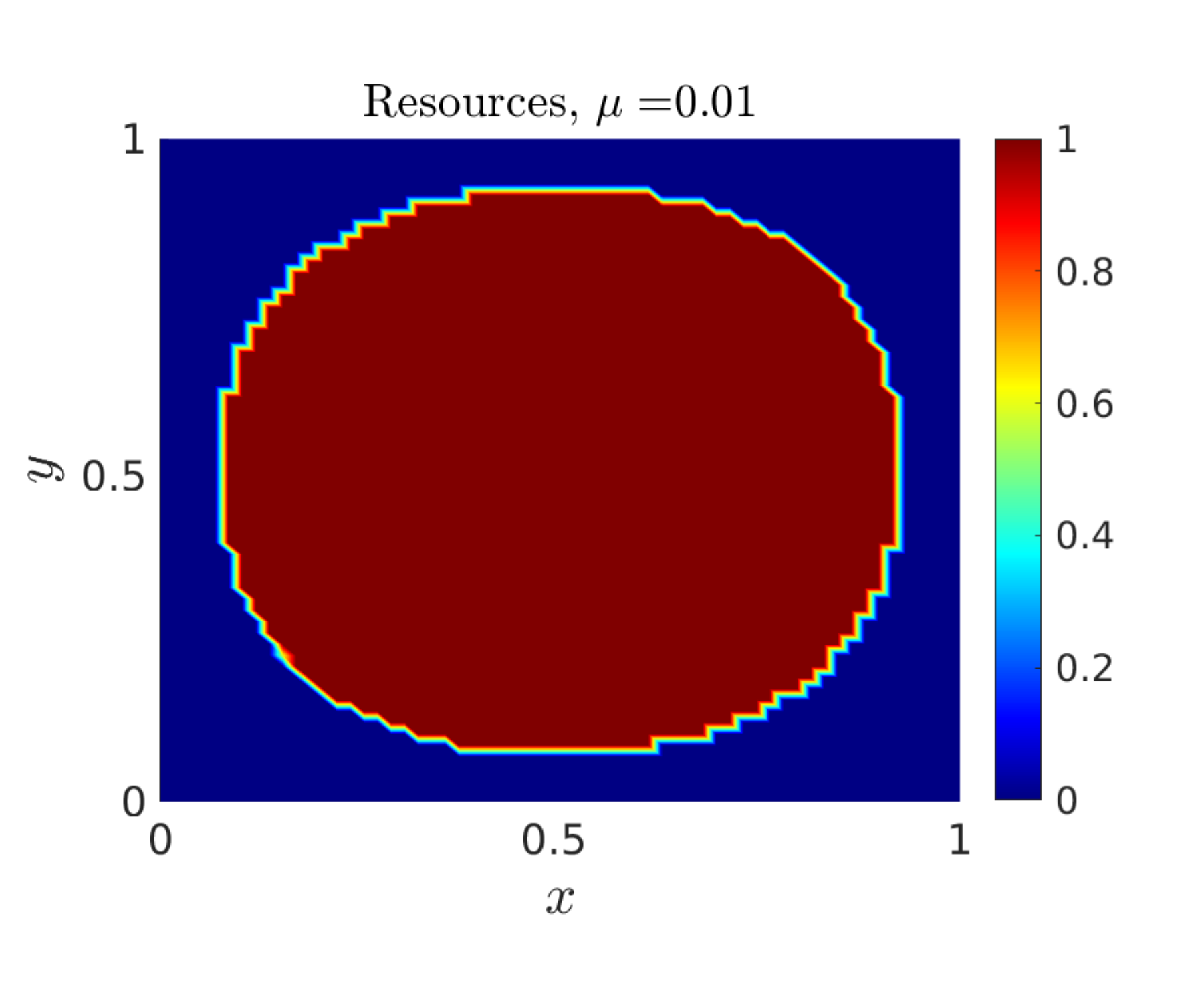}\hspace{1cm}
\includegraphics[width=6cm]{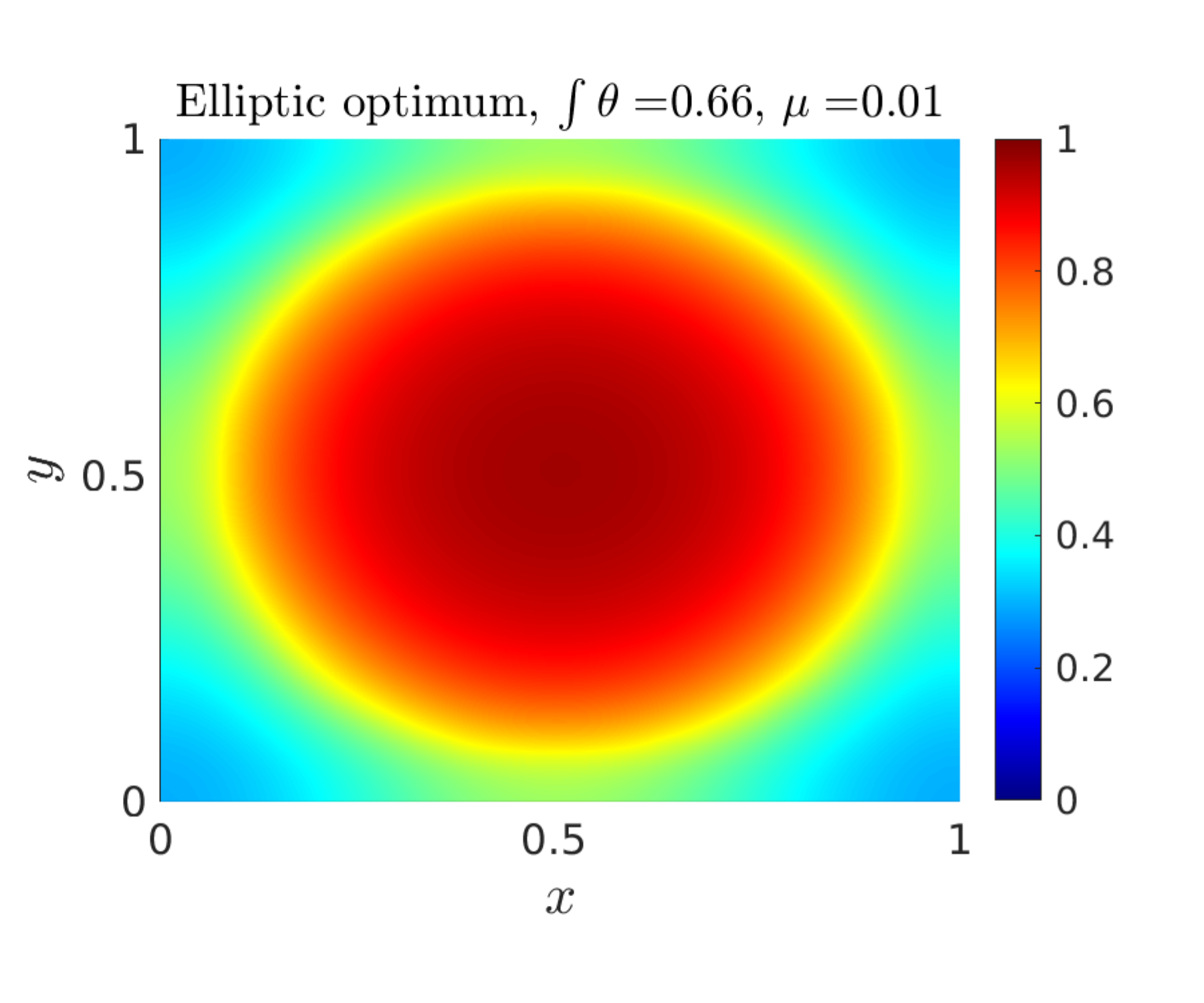}
\caption{}
\end{center}
\end{figure}

\begin{figure}[h!]
\begin{center}
\includegraphics[width=6cm]{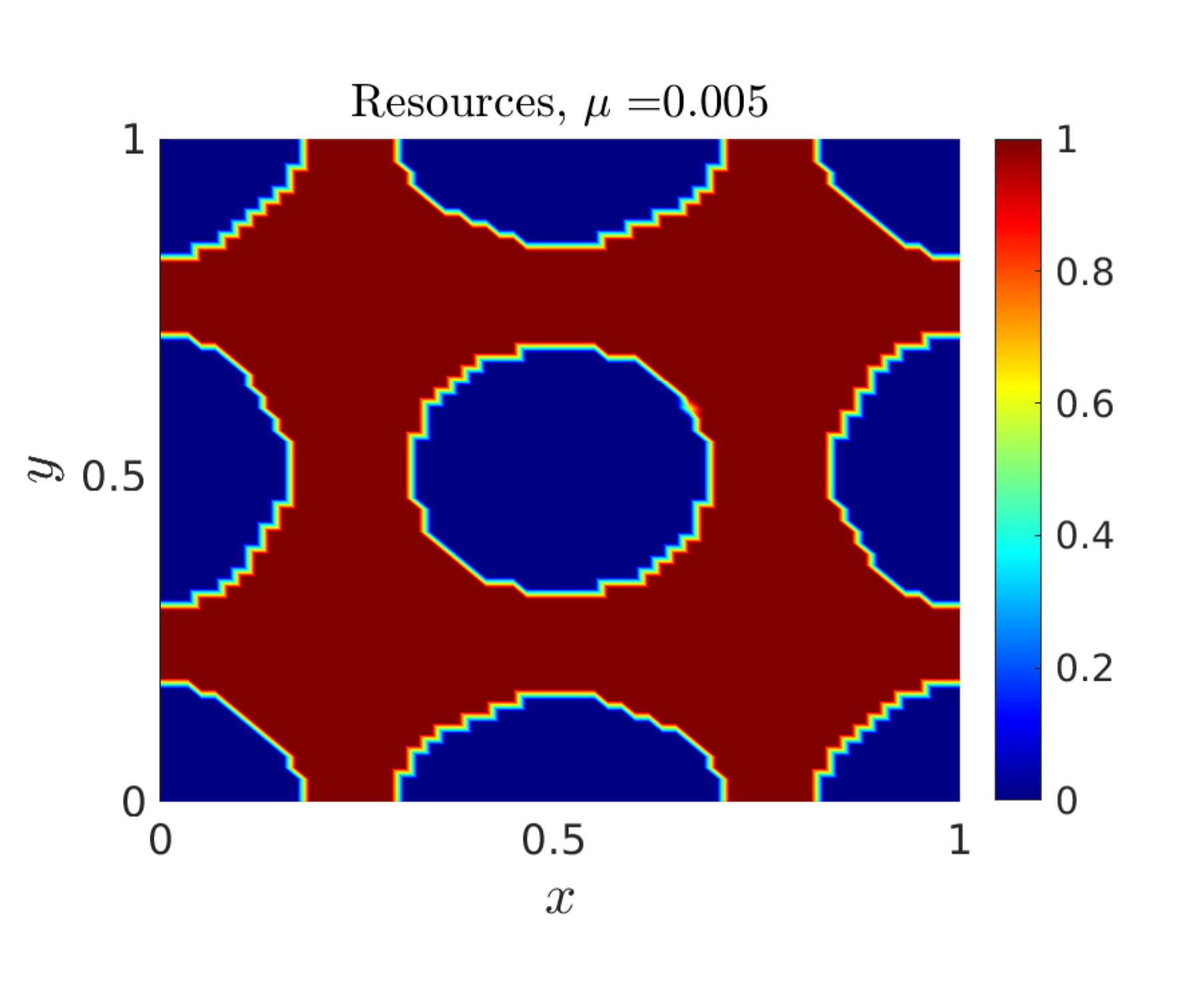}\hspace{1cm}
\includegraphics[width=6cm]{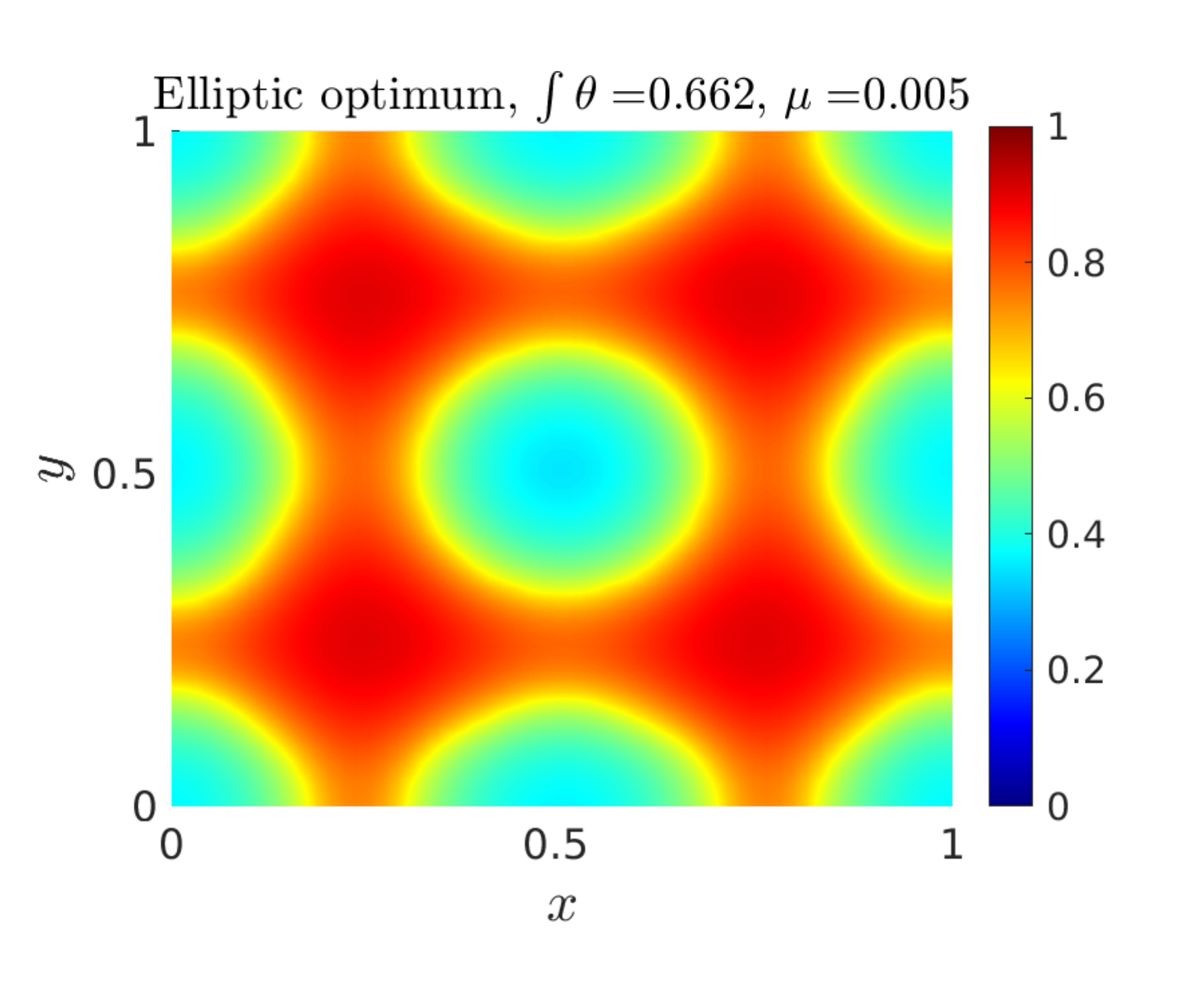}
\caption{}
\end{center}
\end{figure}

\begin{figure}[h!]
\begin{center}
\includegraphics[width=6cm]{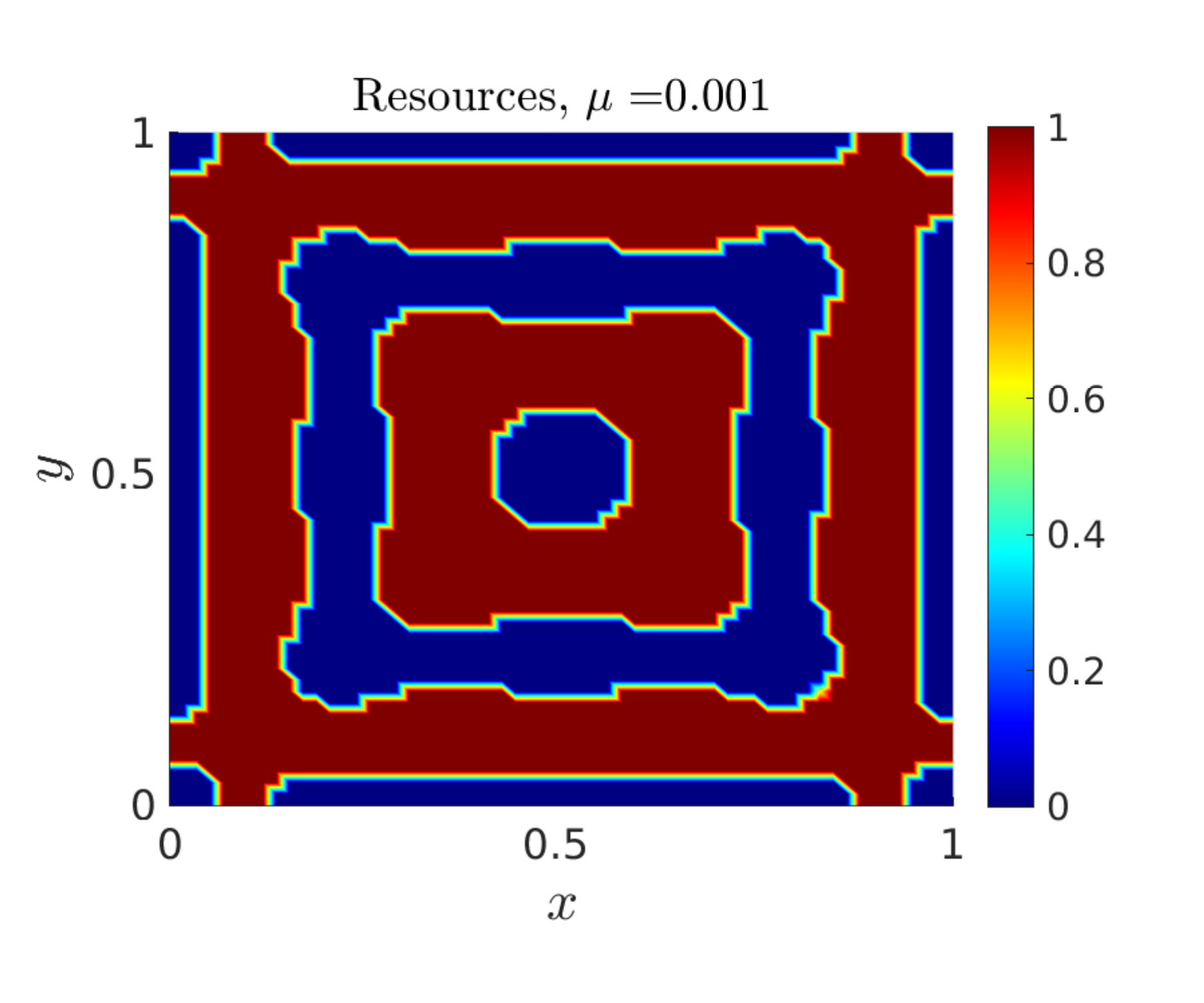}\hspace{1cm}
\includegraphics[width=6cm]{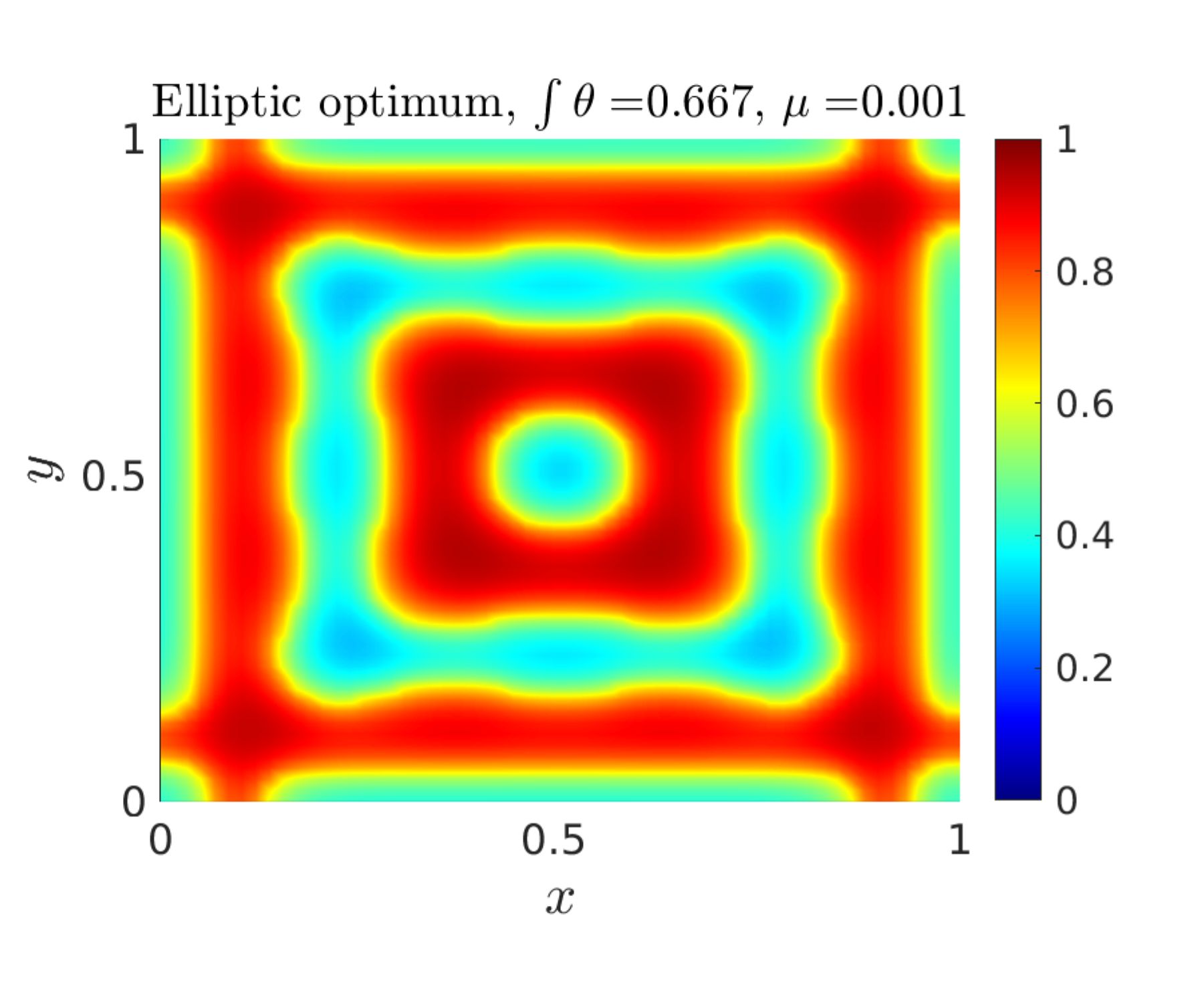}
\caption{}
\end{center}
\end{figure}

\begin{figure}[h!]
\begin{center}
\includegraphics[width=6cm]{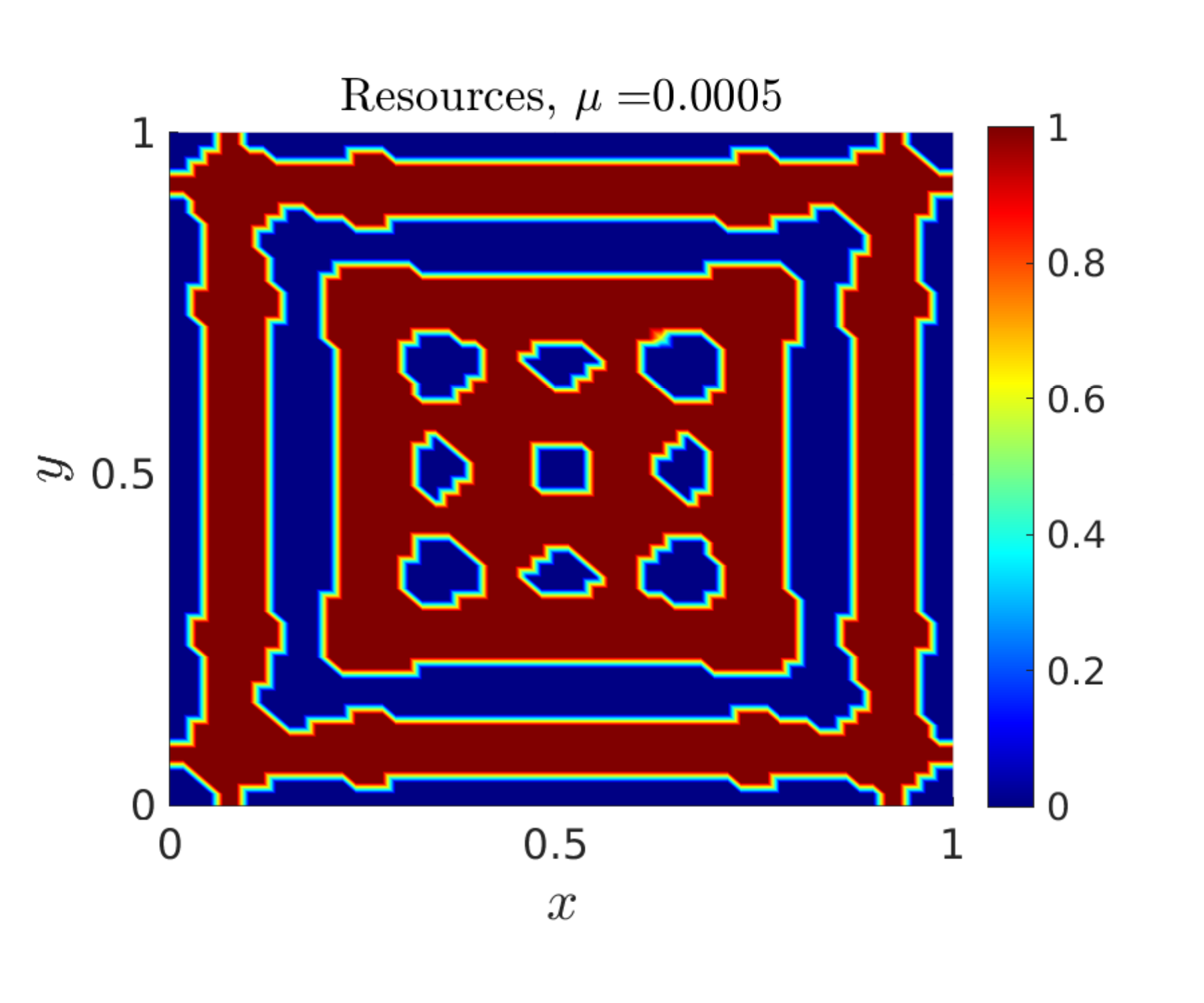}\hspace{1cm}
\includegraphics[width=6cm]{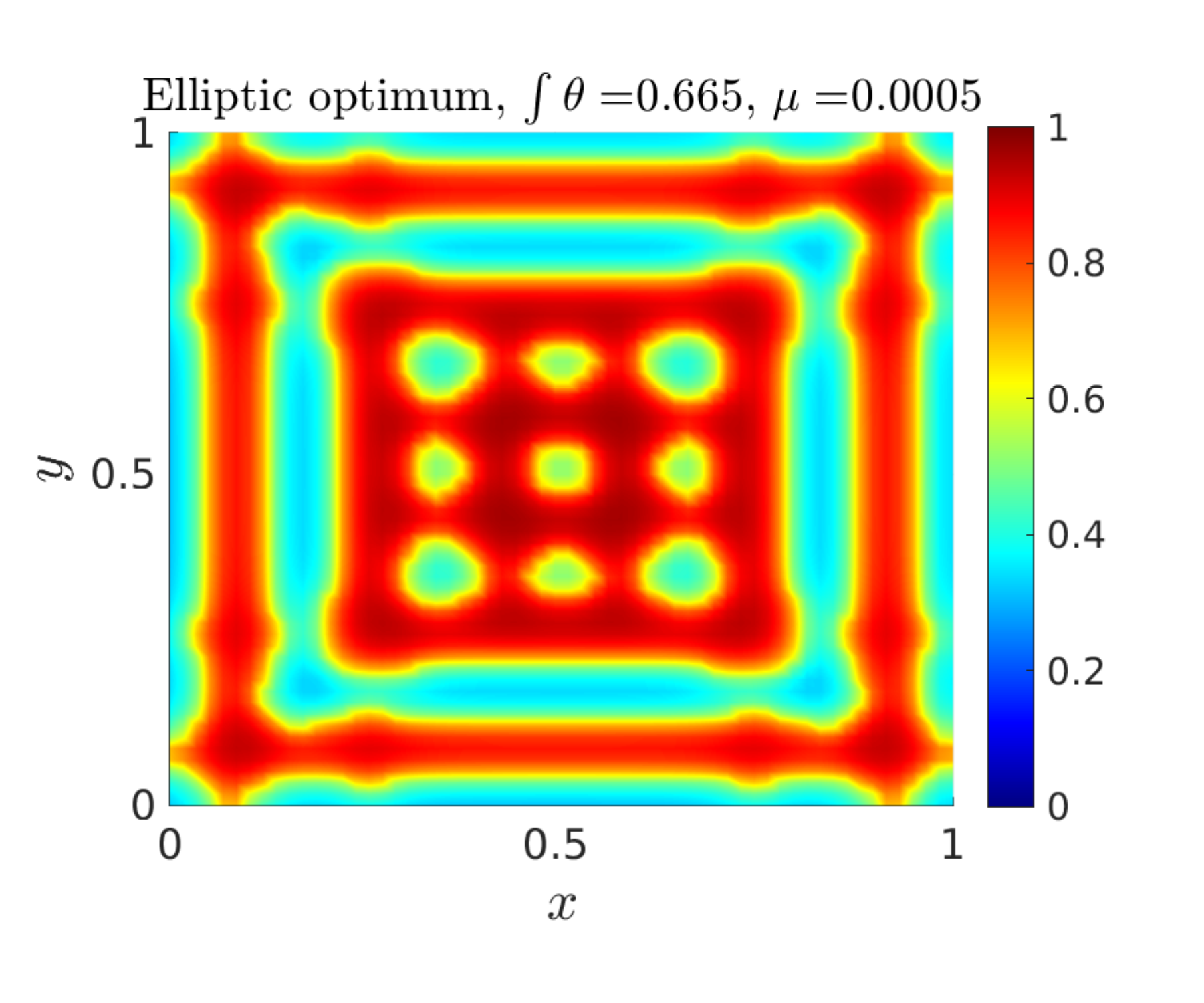}
\caption{}
\end{center}
\end{figure}



\clearpage

\newpage
 
\section{Conclusion and open questions}
In this article, the property we have obtained for the solutions of \eqref{PV} in the case of rectangular geometries underlines the complexity of this variational problem. Several fundamental questions still remain open:
\begin{itemize}
\item \underline{The bang-bang property for general diffusivities:} in other words, can we prove that any solution (or at least one) of \eqref{PV} is a characteristic function for any $\mu>0$? This conjecture was raised in \cite{Ding2010} and, as mentioned in the Introduction, has received partial answers \cite{MNP,NagaharaYanagida} that seem to point towards a positive answer, as the numerical simulations presented in this article do.
\item \underline{Qualitative properties of optimizers for general domains:} is it true that the fragmentation property of Theorem \ref{Th:Frag} holds in more general domain? At this point, we can give no conclusive answer. In the proof of Theorem \ref{Th:Frag}, the crucial element provided by the rectangular geometry is Estimate \ref{Eq:L2}, which bounds  $\underset{\mu \to 0}{\underline\lim}\left(\sup_{\mathcal M(\O)} F(\cdot,\mu)\right)$ from below by $m_0+\eta$ for some $\eta>0$. The proof relies on explicit constructions, and we do not know whether or not other types of arguments could lead to such an estimate.

It should be noted that another question on the geometry of optimal resources distributions was asked in \cite{Ding2010}: when the parameter $m_0$ is small and the domain $\O$ is curved, is it better to concentrate resources near the curved parts of the boundary? We believe this problem to be highly challenging given that, for the problem of the optimal survival ability with Neumann boundary conditions, for which many qualitative results have been established \cite{KaoLouYanagida,LamboleyLaurainNadinPrivat}, the same kind of questions (such as: is it true that for general domains the optimal resources distribution for the survival ability touches the boundary?) have not yet received complete mathematical answers.

\item\underline{Behaviour of the maximizers when $\mu \to 0$:} the question here would be to understand the behaviour of sequences of maximizers as $\mu\to 0$ in the one-dimensional case $\O=(0;1)$. As mentioned in Remark \ref{Re:NoPeriod}, an interesting question is to know whether or not solutions of \eqref{PV} exhibit a periodic structure. This seems very challenging. Another weaker qualitative property of maximizers would be given by the answer to the following question: is it true that, for any sequence $\{\mu_k\}_{k\in \N}$ converging to 0 as $k\to \infty$, the sequence of maximizers of $(\color{Plum}P_{\mu_k} )$ converges weakly to a constant?  
\end{itemize}
\color{black}
\paragraph{Acknowledgment.}
The authors wish to warmly thank the referees for their numerous comments and remarks. They would also like to thank K. Nagahara for scientific exchanges.

This work was started during a research stay of the authors at the the Friedrich-Alexander-Universit\"{a}t at the invitation of Enrique Zuazua and the authors  wish to thank E. Zuazua and the FA{U} University for their hospitality.

I. Mazari was  supported by the French ANR Project ANR-18-CE40-0013 - SHAPO on Shape Optimization and by the Austrian Science Fund (FWF) through the grant I4052-N32 .

D. Ruiz-Balet was  supported by the European Research Council (ERC) under the European Union's Horizon 2020 research and innovation programme (grant agreement No. 694126-DyCon).

This project has received funding from the European Union's Horizon 2020 research and innovation programme under the Marie Sklodowska-Curie grant agreement No.765579-ConFlex, grant MTM2017-92996 of MINECO (Spain), ICON of the French ANR and "Nonlocal PDEs: Analysis, Control and Beyond", AFOSR Grant FA9550-18-1-0242 and the Alexander von Humboldt-Professorship program.

\bibliographystyle{abbrv}

\bibliography{BiblioFrag}

\end{document}